\definecolor{darkgreen}{rgb}{0,0.55,0}
\newcommand{\grad}{\nabla}
\renewcommand{\div}{\grad\cdot}
\newcommand{\R}{\mathbbm{R}}
\DeclareMathOperator{\dist}{dist}
\DeclareMathOperator{\spt}{spt}
\newcommand{\eps}{\varepsilon}
\def\XXint#1#2#3{{\setbox0=\hbox{$#1{#2#3}{\int}$ }
\vcenter{\hbox{$#2#3$ }}\kern-.59\wd0}}
\newtheorem{theorem}{Theorem}
\newtheorem{lemma}{Lemma}
\newtheorem{remark}{Remark}
\newtheorem{cor}{Corollary}
\begin{document}
\phantom{ }
\vspace{4em}

\begin{flushleft}
{\large \bf Vortex dynamics for 2D Euler flows with unbounded vorticity}\\[2em]
{\normalsize \bf Stefano Ceci and Christian Seis}\\[0.5em]
\small Institut f\"ur Analysis und Numerik,  Westf\"alische Wilhelms-Universit\"at M\"unster, Germany.\\
E-mails: ceci@wwu.de, seis@wwu.de \\[3em]
{\bf Abstract:} 
It is well-known that the dynamics of vortices in an ideal incompressible two-dimensional fluid contained in a bounded not necessarily simply connected smooth domain is  described by the Kirchhoff--Routh point vortex system. In this paper, we revisit the classical problem of how well solutions to the Euler equations approximate these vortex dynamics and extend previous rigorous results to the case where the vorticity field is unbounded. More precisely, we establish estimates for the $2$-Wasserstein distance between the vorticity  and the empirical measure associated with the point vortex dynamics. In particular, we derive an estimate on the order of weak convergence of the Euler solutions to the solutions of the point vortex system.
\end{flushleft}

\vspace{2em}

\section{Introduction}

The aim of this paper is to study the motion and interaction of vortices in an ideal incompressible two-dimensional fluid filling up a bounded domain with holes. More precisely, we study the evolution by the Euler equations of vortex patches with possibly unbounded vorticity  under mild concentration assumptions on the initial configuration. In our main result, we quantify the convergence of solutions to the Euler equations towards  a system of interacting point vortices.

The investigation of the dynamics of such idealized  vortex systems goes back to the pioneering work of Helmholtz in the middle of the 19th century \cite{Helmholtz1858}. Helmholtz (implicitly) introduced the point  vortex system in the full space and derived some of its most fundamental properties. In his lectures on mathematical physics \cite{Kirchhoff76}, Kirchhoff later demonstrated that the system is of Hamiltonian form, which was  extended to the case of bounded domains first by Routh \cite{Routh81} and later by Lin \cite{Lin43}. The system of equations governing the motion of point vortices in a bounded region is today accordingly  referred to as the Kirchhoff--Routh system.

The first rigorous connection between the Euler equations and the  point vortex dynamics was established by Marchioro and Pulvirenti \cite{MarchioroPulvirenti83}. The authors consider vortex patches that are initially confined in small regions in the plane and they show that the size of these regions can be suitably controlled during the evolution. The argument exploits the symmetry properties of the two-dimensional Biot--Savart kernel and relies on the regularity of the velocity field away from the vortex patches.  Once the preservation of vorticity concentration   is proved, the convergence towards the point vortex system follows immediately. The Marchioro--Pulvirenti method was subsequently gradually refined in \cite{MarchioroPulvirenti93, CapriniMarchioro15,ButtaMarchioro18}. 

In situations in which the Biot--Savart kernel lacks certain symmetry features, the method from \cite{MarchioroPulvirenti83} seems to fail. This is the case, for instance, for the axisymmetric Euler equations without swirl, which describe the motion and interaction of vortex rings, or for the lake equations, which serve as a shallow water model in the regime of small Froude numbers. In such systems, the dynamics of   point vortices could be derived via energy expansion methods  \cite{BenedettoCagliotiMarchioro00, ButtaMarchioro19,DekeyserVanSchaftingen19}; see also \cite{Turkington87} for an analogous result for the two-dimensional Euler equations. These techniques, however, seem not to be suitable for capturing the interaction of vortices. Indeed, the results in \cite{Turkington87, BenedettoCagliotiMarchioro00, DekeyserVanSchaftingen19} are restricted to single vortices while \cite{ButtaMarchioro19} describes the evolution of travelling wave type solutions. In particular, what remains an open problem until today is the rigorous derivation of the leapfrogging dynamics of interacting vortex rings, which was already announced  by Helmholtz \cite{Helmholtz1858} and later explicitly formulated by Hicks \cite{Hicks22} under the metaphorical title \emph{The mutual threading of vortex rings}.

Quite recently,  Davila, Del Pino, Musso and Wei \cite{DavilaDelPinoMussoWei18} proposed a third ansatz for constructing vortex solutions to the two-dimensional Euler equations  that contain  precise  information on the vortex cores. Their approach relies on what is commonly referred to as the  gluing method --- a method that was previously successfully applied to a number of desingularization problems, including  concentration phenomena along curves for the Schr\"odinger equation \cite{delPinoKowalczykWei07} and non-affine solutions to the stationary Allen--Cahn equation in large dimensions \cite{delPinoKowalzykWei11}.

In the present paper, we reconsider the method of Marchioro and Pulvirenti and extend the results on vortex dynamics for the two-dimensional Euler equations to the case of unbounded vorticity fields. Such a result was, in a certain sense, 
already foreshadowed by a recent contribution of Caprini and Marchioro \cite{CapriniMarchioro15}, in which the authors prove the connection between the Euler equations and the point vortex model in situations where the vorticity is bounded but of arbitrary large amplitude. To be more specific, we consider vorticity fields with a (suitably small)  control in $L^p$ for some $p>2$. Our main result, Theorem \ref{th2}, states that any solution to the Euler equation whose vorticity initially concentrates around a finite collection of points, remains (in a scale-independent time interval) concentrated  at the same scale  and the center of concentration can be chosen as the vortex points that evolve by the Kirchhoff--Routh model.  This extension is particularly remarkable, as our integrability setting falls outside the range of known well-posedness results in $L^{\infty}$ or $\text{BMO}$ established by Yudovich \cite{Yudovich63}  and Vishik \cite{Vishik99}; see also \cite{Loeper06,AzzamBedrossian15} for alternative and simplified proofs. Yet, it is crucial to note that the range of integrability exponents we are dealing with is precisely the one in which the fluid velocity is (H\"older) continuous and thus bounded as a consequence of Calder\'on--Zygmund and Sobolev inequalities.  

As a measure of concentration, we consider the Wasserstein distance $W_2$ between the vorticity field and the empirical measure associated with the point vortices. Because Wasserstein distances metrize weak convergence, cf.~\cite[Theorem 7.12]{Villani03},   our result provides an estimate on the order of weak convergence of (possibly non-unique) solutions to the Euler equations towards the unique solution to the Kirchhoff--Routh model. We remark that the Wasserstein distance played a crucial role already in the original work of Marchioro and Pulvirenti \cite{MarchioroPulvirenti83}, even though it has been considered rather as an auxiliary  second moment function in there, cf.~\eqref{C1}.

Weak concentration measures were recently introduced in the context of the three-dimensional Euler vortex filament dynamics   in \cite{JerrardSeis17}. In this work, the flat norm is used to measure the distance between the three dimensional vorticity vector from a curve in $\R^3$.  Note that in two dimensions, it is nothing but the Wasserstein (or Kantorovich--Rubinstein) distance $W_1$, or, equivalently, the  norm associated with the negative Sobolev space $W^{-1,1}$.
Weak concentration measures  proved to be suitable tools already in the context of the dynamics of Ginzburg--Landau-type vortices, see for example \cite{CollianderJerrard98,JerrardSpirn15}. 

In a certain sense, our concentration estimate can be considered as a stability result for the two-dimensional Euler equations. Indeed, our main estimate provides a control of the distance between the vorticity field and the empirical measure associated with the point vortex dynamics --- which can be formally considered as a singular solution to the Euler equations   --- in terms of the distance of the corresponding initial configurations. 
Hence, in some sense, we provide a stability-type estimate between a weak  and an even weaker solution, that, however, carries more structure. This type of result thus differs from what is usually considered in the literature, like   estimates for weak-strong couples of solutions, e.g., \cite{Wiedemann17,SerfatyDuerinckx18}, or the propagation of compactness, e.g., \cite{CrippaSemenovaSpirito18}.

We remark that until today, general stability estimates for the Euler equations are not available. To the best of our knowledge, the estimates that are  closest to stability estimates are those by Loeper \cite{Loeper06}, who reproves Yudovich's uniqueness result by using optimal transportation techniques.  (He mainly works with the Wasserstein distance $W_2$ as well. See also \cite{Seis20} for a reformulation.)  Even for  linear transport equations with general Sobolev vector fields, stability estimates were obtained only recently in  \cite{Seis17,Seis18}; see also \cite{CrippaDeLellis08} for the corresponding results for Lagrangian flows.

In the following section, we will present the precise mathematical setting and state our main result, the proofs of which can be found in Section \ref{S3}.

\section{Mathematical setting and result}\label{S2}

We consider the Euler  equations in a bounded smooth  connected domain $\Omega$ in $\R^2$, where we allow for the presence of a finite number of disjoint holes. We denote by $\Gamma_1,\ldots,\Gamma_M$ the inner boundaries of $\Omega$, and by $\Gamma_0$ the exterior contour. All boundaries are assumed to be smooth closed curves (of positive length), and disjoint one to another.

The evolution can be stated in terms of the scalar vorticity field $\omega=\omega(t,x)\in\R$, which is simply transported by the flow of the fluid velocity $u=u(t,x)\in\R^2$ and thus mathematically described by the transport equation
\begin{equation}\label{eq1}
	\partial_t\omega+u\cdot\nabla\omega=0\quad \text{in }\Omega.
\end{equation}
We suppose that the fluid is incompressible, which translates into the mathematical condition $\div u=0$ in $\Omega$. By assuming no-penetration boundary conditions, that is, $u\cdot \nu=0$ on $\partial\Omega$, where $\nu$ is the outer normal, we ensure that there is no flow across the boundary of the domain.

It is a well-known consequence of Kelvin's circulation theorem that the mean tangential velocity along any boundary component is preserved by any sufficiently regular solution to the momentum equation. That is, there exists circulation constants $\gamma_1,\dots, \gamma_M\in\mathbb{R}$, which are determined by the initial velocity, such that \begin{equation*}
	\int_{\Gamma_m} u(t,x)\cdot\tau(x)\,d\sigma=\gamma_m,
\end{equation*}
for every time $t>0$ and any $m=1,\ldots,M$.  Here, $\tau$ denotes the unit vector tangential to the boundary, by convention set in counterclockwise direction. In fact, the validity of Kelvin's theorem in a framework that is less regular than the one considered in the present paper has been recently investigated in \cite{IftimieLopesNussenzveig20}.

%
%

While the vorticity can be computed from the velocity field by taking the curl, that is, $\omega=\partial_1 u_2-\partial_2 u_1$, the velocity field can be reconstructed from the vorticity and the circulation numbers with the help of the Biot--Savart law. The formulation requires some preparations. We consider the harmonic measures  $w_1,\ldots,w_M:\Omega\to\R$ solving  
\begin{equation*}
	\begin{cases}
	\Delta w_m=0 &\text{in }\Omega, \\
	w_m=1 &\text{on }\Gamma_m, \\
	w_m=0 &\text{on }\Gamma_l, \; l\neq m, \\
	w_m=0 &\text{on }\Gamma_0,
	\end{cases}
\end{equation*}
and let for $\xi_1,\ldots,\xi_M:\Omega\to\mathbb{R}^2$ be the unique divergence-free and curl-free vector fields satisfying $\xi_m\cdot\nu=0$ on $\partial\Omega$ and
\begin{equation*}
	\int_{\Gamma_l} \xi_m\cdot\tau\,d\sigma=\begin{cases}
	1 &\text{if }l=m ,\\
	0 &\text{if }l\neq m,
	\end{cases}
\end{equation*}
for any $l,m=1,\ldots,M$.
The Biot--Savart law then reads
\begin{equation}\label{eq26}
u(t,x)=K*\omega(t,x)+\nabla^\perp\eta(t,x)+\sum_{m=1}^M\left(\int_{\Omega} w_m(z)\omega(t,z)\,dz+\gamma_m\right)\xi_m(x).
\end{equation}
Here $K$ is the rotated gradient of the Newtonian potential $G(z) = -\frac1{2\pi}\log|z|$, that is, $K = -\nabla^\perp G$ or
\begin{equation*}
K(z)=\frac{1}{2\pi}\frac{z^\perp}{|z|^2} \quad \text{for }z^\perp=(-z_2,z_1),
\end{equation*}
and $\eta$ is the harmonic extension of  $G\ast \omega$ (this is the stream function in $\R^2$) restricted to the boundary,
\begin{equation}\label{eq7}
\begin{cases}
-\Delta\eta(t,\cdot)=0 & \text{in }\Omega, \\
\eta(t,\cdot)=G*\omega(t,\cdot) & \text{on }\partial\Omega.
\end{cases}
\end{equation}

We finally equip the Euler vorticity equation with an initial condition,
\[
\omega(0) = \bar \omega\quad\mbox{in }\Omega.
\]

Throughout this article, we assume that the vorticity field belongs to the Le\-besgue space $L^{\infty}((0,T);L^p(\Omega))$ for some $p>2$. Thanks to Calder\'on--Zygmund theory, the associated velocity field is thus Sobolev regular in the spatial variable,   $u\in L^{\infty}((0,T);W^{1,p}(\Omega))$, and then bounded, $u\in L^{\infty}((0,T)\times \Omega)$, by Sobolev embedding. It follows that the product $u\omega$ is integrable in $\Omega$ and thus, the transport equation \eqref{eq1} can be interpreted in the sense of distributions. 
It is known that such a solution exists for every $p\ge 1$, see e.g. \cite{IftimieLopesNussenzveig20}, however still nothing is known about uniqueness if $p<\infty$. Moreover, every solution is renormalized in the sense of DiPerna and Lions \cite{DiPernaLions89}, that is
\begin{equation*}
\partial_t\beta(\omega)+u\cdot\nabla\beta(\omega)=0\quad\mbox{in }\Omega
\end{equation*}
for every bounded $\beta\in C^1(\R)$ that  vanishes near $0$ and has suitable decay properties at infinity. Furthermore, from the theory in \cite{DiPernaLions89} (and \cite{Ambrosio04}) it follows that the vorticity is transported by the (regular) Lagrangian flow $\phi$ of the velocity field $u$, that is
\[
	\omega(t,\phi_t(x))=\bar{\omega}(x),
\]
where $\phi = \phi_t(x)\in\R^2$ solves the ordinary differential equation
\begin{equation}\label{C13}
    \partial_t\phi_t(x)=u(t,\phi_t(x)), \quad \phi_0(x)=x,
\end{equation}
see \cite{Ambrosio04} for a precise definition in the case of rough velocity fields.

We shall now make our choice of initial data more specific in order to be able to capture the vortex dynamics that we described in the introduction. We suppose that the vorticity can be decomposed into $N$ separated patches, that is, we suppose that
\[
\bar \omega = \sum_{i=1}^N \bar \omega_i,
\]
and the patches are disjointly supported and not touching the  boundary,
\begin{equation}\label{C9}
	\min\limits_{i\neq j}\text{dist}\big(\spt \bar{\omega}_i,\spt \bar{\omega}_j\big)\ge\delta, \quad \min\limits_{i}\text{dist}\big(\spt\bar{\omega}_i,\partial\Omega\big)\ge\delta,
\end{equation}
for some $\delta>0$. We also assume that every patch has fixed sign, that is for every $i$ it must hold either $\bar{\omega}_i \ge 0$ or $\bar{\omega}_i \le 0$. At any later time, we may write
\[
\omega(t) = \sum_{i=1}^N \omega_i(t),
\]
where $\omega_i$ is the unique (cf.~\cite{DiPernaLions89}) solution to the linear transport equation $\partial_t \omega_i + u\cdot \grad \omega_i=0$ with the initial datum $\omega_i(0) = \bar \omega_i$, and thus, $\omega_i$ transported by the flow $\phi$. In particular, because $u$ is bounded, there exists a maximal time $T\in (0,\infty]$ such that the supports of the vortex patches $\omega_1,\dots, \omega_N$ remain separated and in distance to the boundary in the sense that
\begin{equation}\label{C5}
	\min\limits_{i\neq j}\text{dist}\big(\spt {\omega}_i(t),\spt {\omega}_j(t)\big)\ge\frac{\delta}2, \quad \min\limits_{i}\text{dist}\big(\spt {\omega}_i(t) ,\partial\Omega\big)\ge\frac\delta2
\end{equation}
for all $t\in[0,T)$.

We denote the  intensity of the $i$th vortex  patch by $a_i$. It is preserved by the evolution and given by
\begin{equation}\label{C4}
a_i=\int_{\Omega}\bar \omega_i\,dx  = \int_{\Omega} \omega_i(t)\, dx.
\end{equation} 
We suppose that each vortex patch is initially concentrated around a certain point $\bar Y_i$ in $\Omega$ in the sense that
\begin{equation}\label{eq23}
W_2\left(\frac{\bar{\omega}_i}{a_i},\,\delta_{\bar{Y}_i}\right)\le\varepsilon,
\end{equation}
where $W_2$ is the $2$-Wasserstein distance 
and the concentration scale $\eps$ is much smaller than the separation scale $\delta$, that is,
\[
\eps\ll \delta.
\]
Notice that the Wasserstein distance is well-defined because, by \eqref{C4}, $\bar\omega_i/a_i$ and $\delta_{\bar Y_i}$ are both probability measures. For a comprehensive  introduction into Wasserstein distances (and the theory of optimal transportation in general), we refer to Villani's monograph \cite{Villani03} and Chapter 7 therein.  Notice that if one of the marginals is an atomic measure as in \eqref{eq23}, the Wasserstein distance reduces to a simple second moment function
\begin{equation}\label{C1}
W_2\left(\frac{\bar{\omega}_i}{a_i},\,\delta_{\bar{Y}_i}\right) =\left(\frac1{a_i}  \int_{\Omega} |x-\bar Y_i|^2 \bar \omega_i\, dx\right)^{\frac12}.
\end{equation}
Considering this expression as a function of $\bar Y_i$, it is easily seen that the Wasserstein distance is minimized by locating $\bar Y_i$ at the center of vorticity. Indeed, setting
\begin{equation*}
\bar{X}_i=\frac{1}{a_i}\int_{\Omega} x\,\bar{\omega}_i(x)\,dx,
\end{equation*}
it holds that
\begin{equation}\label{C6}
W_2\left(\frac{\bar{\omega}_i}{a_i},\,\delta_{\bar{X}_i}\right)\le\inf\limits_{\bar Y_i}W_2\left(\frac{\bar{\omega}_i}{a_i},\,\delta_{\bar Y_i}\right).
\end{equation}

We suppose that the intensities $a_i$,  the circulations $\gamma_m$ and the separation scale $\delta$ are independent of the concentration scale $\eps$. Then \eqref{eq23} means that, up to rescaling with the scale independent constant $a_i$,  the vortex patch $\bar \omega_i$ approximates the Dirac measure $\delta_{\bar Y_i}$ if $\eps\ll1$. A prototype vorticity field   is thus a Dirac sequence or a constant vortex patch of the form $ \varepsilon^{-2}\chi_{B_\varepsilon(\bar Y_i)}$.  In this paper, we consider unbounded perturbations of such sequences in the sense that
$\bar \omega = \bar \omega^p + \bar \omega^{\infty}$ and 
\begin{equation}\label{eq6}
\|\bar{\omega}^{p}\|_{L^p}\lesssim\frac{1}{\varepsilon^{2(1-\frac{2}{p})}}, \quad \|\bar{\omega}^{\infty}\|_{L^\infty}\lesssim\frac{1}{\varepsilon^2}
\end{equation}
for some $p>2$. Here and in the following,  $A\lesssim B$ means that 
$A$ is bounded by $B$ up to a multiplicative constant independent of $\varepsilon$ and $t$. Notice that these perturbations are \emph{small} in the sense that $\|\varepsilon^{-2}\chi_{B_\varepsilon(\bar Y)}\|_{L^p}\sim \eps^{2(\frac1p-1)}\gg\eps^{2(\frac2p-1)}\gtrsim \|\bar{\omega}^{p}\|_{L^p}$. We also remark that, as a consequence of the DiPerna--Lions theory, if $\omega_i^p$, $\omega_i^\infty$ are defined as the solutions to the linear transport equation with velocity $u$ and initial datum  $\bar{\omega}_i^p$ and $\bar{\omega}_i^\infty$, respectively, so that $\omega_i=\omega_i^p+\omega_i^\infty$ by the uniqueness for the linear equation, we have thanks to the renormalization property that
\[
\|\omega_i^p(t)\|_{L^p}=\|\bar{\omega}_i^p\|_{L^p}\le\|\bar{\omega}^p\|_{L^p}
\]
and
\[
\|\omega_i^\infty(t)\|_{L^\infty}=\|\bar{\omega}_i^\infty\|_{L^\infty}\le\|\bar{\omega}^\infty\|_{L^\infty}.
\]
Thus the bound imposed in \eqref{eq6} on the initial datum carries over to the solution $\omega_i(t)$.


Our main goal in this paper is to establish a rigorous link between the Euler equations and the Kirchhoff--Routh point vortex system
\begin{equation}\label{eq18}
    \begin{cases}
    \frac{dY_i}{dt}(t)=\sum_{j\neq i}a_j K(Y_i(t)-Y_j(t))+\nabla^\perp\theta(t,Y_i(t)) \\
    \quad\quad\quad\quad\quad\quad+\sum_{m=1}^{M} \left(\sum_{j=1}^{N} a_j w_m(Y_j)+\gamma_m\right) \xi_m(Y_i), \\
    Y_i(0)=\bar{Y}_i,
    \end{cases}	
\end{equation}
where $\theta$ represents the interaction with the boundary and is defined as the solution to the Laplace equation
\begin{equation*}
	\begin{cases}
	-\Delta\theta(t,\cdot)=0 &\text{in }\Omega, \\
	\theta(t,\cdot)=\sum_{j}a_j G(\cdot-Y_j(t)) &\text{on }\partial\Omega.
	\end{cases}
\end{equation*}
Upon choosing $T$ smaller if necessary, we will furthermore assume that
\begin{equation}\label{C12}
\min\limits_{i\neq j}|Y_i(t)-Y_j(t)|\ge\frac{\delta}2, \quad  \min\limits_{i}\text{dist}\big(Y_i(t),\partial\Omega\big)\ge\frac{\delta}2 
\end{equation}
for all $t\in [0,T)$. Notice that for certain initial configurations, a collapse of vortices is possible. Choosing $T$ with the above condition, however, avoids this scenario. We refer to the nice survey paper \cite{Aref07} for a discussion, see also \cite{FlucherGustafsson97} for stability analyses.

Under   the scaling estimate \eqref{eq6}, we show that if the vorticity initially concentrates around the points  $\bar Y_1,\dots,\bar Y_N$  in the sense of \eqref{eq23}, then at any later time $t\in[0,T)$, the vorticity concentrates around the solution $Y_1(t),\dots ,Y_N(t)$ of the Kirchhoff--Routh system \eqref{eq18}.

\begin{theorem}\label{th2}
Let $T$ be given such that \eqref{C5} and \eqref{C12} hold. Let $\bar Y_1,\dots,\bar Y_N\in\Omega$ be such that \eqref{eq23} holds
for any $i\in\{1,\dots,N\}$ and suppose that $Y_1,\dots,Y_N$ solve the Kirchhoff--Routh system. Then $T$ is independent of $\eps$, i.e, $T\gtrsim 1$, and there exists a constant $C<\infty$ independently of $\eps$ such that, for any $i\in \{1,\dots,N\}$, 
\begin{equation}\label{C2}
W_2\left(\frac{\omega_i(t)}{a_i} ,\delta_{Y_i(t)}\right) \lesssim e^{Ct}\eps\quad \mbox{for all }t\in [0,T).
\end{equation}
\end{theorem}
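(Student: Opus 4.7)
The plan is to use the second moment functional
\begin{equation*}
\mu_i(t) := \frac{1}{a_i}\int_\Omega |x-Y_i(t)|^2\, \omega_i(t,x)\, dx
\end{equation*}
as a Lyapunov-type quantity; by \eqref{C1} it coincides with $W_2^2(\omega_i(t)/a_i,\delta_{Y_i(t)})$ when $\omega_i\ge 0$, and the sign-changing case is handled by replacing $\omega_i,a_i$ with their absolute values. The initial hypothesis \eqref{eq23} yields $\mu_i(0)\le \eps^2$, so the target \eqref{C2} reduces to a linear Gr\"onwall estimate $\frac{d}{dt}\sum_i\mu_i\lesssim \sum_i\mu_i$. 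Differentiating $\mu_i$ in time, using \eqref{eq1} for $\omega_i$ and \eqref{eq18} for $Y_i$, and integrating by parts (permitted by $\div u=0$ and the separation of $\spt\omega_i$ from $\partial\Omega$ in \eqref{C5}), I obtain
\begin{equation*}
\dot\mu_i = \frac{2}{a_i}\int_\Omega (x-Y_i(t))\cdot\bigl(u(t,x)-\dot Y_i(t)\bigr)\,\omega_i(t,x)\, dx.
\end{equation*}

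I would then split the velocity via \eqref{eq26} as $u=K*\omega_i+V$, where $V$ collects the cross-interactions $\sum_{j\ne i}K*\omega_j$, the stream-function term $\nabla^\perp\eta$, and the circulation corrections; analogously the right-hand side of \eqref{eq18} defines a field $V_{\mathrm{KR}}$ with $\dot Y_i=V_{\mathrm{KR}}(Y_i)$. The self-interaction contribution vanishes identically by the classical Biot--Savart symmetrization,
\begin{equation*}
\iint (x-Y_i)\cdot K(x-y)\,\omega_i(x)\omega_i(y)\, dy\, dx = \tfrac12\iint (x-y)\cdot K(x-y)\,\omega_i(x)\omega_i(y)\, dy\, dx = 0,
\end{equation*}
since $K(-z)=-K(z)$ and $z\cdot z^\perp=0$. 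This algebraic step is rigorous thanks to \eqref{eq6} with $p>2$: Calder\'on--Zygmund theory and Sobolev embedding give $K*\omega_i\in L^\infty$, so the double integral converges absolutely.

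It remains to control the cross contribution $\frac{2}{a_i}\int(x-Y_i)\cdot(V(x)-V_{\mathrm{KR}}(Y_i))\omega_i\, dx$, which I would do via the pointwise bound
\begin{equation*}
|V(x)-V_{\mathrm{KR}}(Y_i)|\ \le\ |V_{\mathrm{KR}}(x)-V_{\mathrm{KR}}(Y_i)|+|V(x)-V_{\mathrm{KR}}(x)|\ \lesssim\ |x-Y_i|+\sum_j\mu_j^{1/2},\qquad x\in\spt\omega_i.
\end{equation*}
The first summand uses that $V_{\mathrm{KR}}$ is uniformly Lipschitz on $\spt\omega_i$ with constant depending only on $\delta$, because the separations \eqref{C5}--\eqref{C12} keep the singularities of $K(\cdot-Y_j)$, $\nabla^\perp\theta$, and $\xi_m$ at distance $\gtrsim\delta$ from $\spt\omega_i$. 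For the second summand, I would write each cross-interaction difference as
\begin{equation*}
(K*\omega_j)(x)-a_jK(x-Y_j) = a_j\int\bigl[K(x-y)-K(x-Y_j)\bigr]\frac{\omega_j(y)}{a_j}\, dy,
\end{equation*}
and use the Lipschitz regularity of $K$ away from its singularity together with Jensen's inequality ($W_1\le W_2=\mu_j^{1/2}$) to obtain a bound $\lesssim a_j\mu_j^{1/2}$; the same template handles the $w_m$-coefficient differences. The harmonic piece $\nabla^\perp(\eta-\theta)$ is treated by noting that $\eta-\theta$ is harmonic in $\Omega$ with boundary values $\sum_j[G*\omega_j-a_jG(\cdot-Y_j)]$ of size $\lesssim\sum_j a_j\mu_j^{1/2}$ in $L^\infty(\partial\Omega)$ (using separation from $\partial\Omega$); elliptic regularity then transfers this into the corresponding $L^\infty$ bound for $\nabla^\perp(\eta-\theta)$ on any $\delta/2$-interior neighborhood. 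Integrating against $(x-Y_i)\omega_i$ and applying Cauchy--Schwarz gives $\dot\mu_i\lesssim\mu_i+\mu_i^{1/2}\sum_j\mu_j^{1/2}$, and Young's inequality closes the Gr\"onwall loop.

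The main obstacle I expect lies not in the Gr\"onwall algebra but in the bootstrap asserting $T\gtrsim 1$. The derivation above presupposes \eqref{C5}--\eqref{C12} up to time $T$, whereas under the scaling \eqref{eq6} the pointwise velocity on each vortex core can blow up as $\eps\to 0$. The rescue is that the center of mass $\bar X_i=\frac{1}{a_i}\int x\omega_i\, dx$ evolves by $\dot{\bar X}_i=\frac{1}{a_i}\int V\omega_i\, dx$, the self-interaction dropping out by the same antisymmetry argument, and the right-hand side is uniformly bounded in $\eps$ via \eqref{eq6} together with the $L^1$ and $L^\infty$ control on $\omega_i^p$ and $\omega_i^\infty$ inherited from the DiPerna--Lions renormalization. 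Combining this $\eps$-uniform speed bound for the centers with the concentration estimate \eqref{C2} via a standard continuity argument then promotes any a priori separation time to an $\eps$-independent interval of order one, matching the claim of the theorem.
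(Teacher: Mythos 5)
Your Gr\"onwall scheme for the concentration estimate is essentially sound and is in fact a streamlined variant of the paper's route (the paper first proves concentration around the moving center of vorticity $X_i(t)$ in Lemma \ref{lem3} and then separately Gr\"onwalls $|X_i-Y_i|$; you do both in one step around $Y_i(t)$). One caveat there: your Lipschitz bounds require the singularities $Y_j$, $j\neq i$, to stay at distance $\gtrsim\delta$ from $\spt\omega_i(t)$, which does \emph{not} follow from \eqref{C5} and \eqref{C12} alone --- those conditions separate supports from supports and points from points, not points from supports. The paper avoids this by comparing $K(x-y)$ with $K(Y_i-Y_j)$, where both arguments have modulus $\ge\delta/2$ by \eqref{C5} and \eqref{C12} respectively; your version can be repaired by a small bootstrap (a probability measure with second moment $\le r^2$ about $Y_j$ has support meeting $\bar B_r(Y_j)$, so the $W_2$ bound being propagated keeps $Y_j$ close to $\spt\omega_j$ and hence far from $\spt\omega_i$), but you should say so.

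The genuine gap is in the final paragraph, and it is the heart of the matter. The time $T$ is defined through \eqref{C5}, i.e.\ through the \emph{supports} $\spt\omega_i(t)$, and a $W_2$ (second-moment) estimate does not control supports: an arbitrarily small amount of vorticity may sit arbitrarily far from $Y_i(t)$ without affecting $W_2$. Consequently, an $\eps$-uniform speed bound for the centers $X_i$ plus the concentration estimate cannot, by any ``standard continuity argument,'' prevent the support from reaching $\spt\omega_j$ or $\partial\Omega$ in time $o(1)$: the relevant quantity is the velocity of individual fluid particles on $\spt\omega_i$, and the self-induced field $K*\omega_i$ there is of order $\eps^{-1}$ under \eqref{eq6} (cf.\ \eqref{C7}). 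This is exactly what the paper's Lemma \ref{prop1} and Lemma \ref{lem6} are for: using the interpolation estimate \eqref{eq25} together with the already-established concentration (to bound the $L^1$ mass outside $B_{R/2}(X_i)$), one shows $|u_i(t,x)|\lesssim 1+e^{Ct}/|x-X_i(t)|$, so a particle is fast only while it is close to the center; the dichotomy argument of Lemma \ref{lem6} (center moves far, versus particle stays far from the center and hence has bounded speed over the last stretch of length $\delta/32$) then yields $T\gtrsim 1$. Without an argument of this type --- which is precisely where the hypotheses $p>2$ and \eqref{eq6} enter quantitatively --- the claim $T\gtrsim 1$ is unproved, and with it the whole theorem, since all your estimates are conditional on $t<T$.
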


The result in Theorem \ref{th2} translates into an estimate between the full vorticity field $\omega = \sum_i \omega_i$ and the empirical measure associated with the point vortex system. Indeed, interpreting the Kantorovich--Rubinstein distance $W_1$ as the dual norm $W^{-1,1}$, that is,
\[
W_1(f,g) = \sup\left\{\int_{\Omega} (f-g)\zeta\, dx: \|\grad \zeta\|_{L^{\infty}}\le 1\right\},
\]
cf.~\cite[Theorem 1.14]{Villani03}, the distance function can be readily extended to a distance between two not necessarily nonnegative functions of equal mean by setting
\[
W_1(f,g) = W_1((f-g)_+,(f-g)_-),
\]
where the subscript plus and minus signs indicate the positive and negative parts of a function. We then have the following estimate.

\begin{cor}\label{cor2}
Under the assumptions of Theorem \ref{th2}, it holds that
\[
W_1\left(\omega(t),\sum_{i=1}^N a_i \delta_{Y_i(t)}\right) \lesssim  e^{Ct}\eps\quad\mbox{for all }t\in[0,T).
\]
\end{cor}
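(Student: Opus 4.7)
The plan is to derive the corollary as a direct consequence of Theorem~\ref{th2} using only standard manipulations of Wasserstein distances. First I would reduce the bound on the sum to a bound on each vortex patch separately by means of the dual (Kantorovich--Rubinstein) formulation. Since $\int \omega_i(t)\,dx = a_i$ for every $i$, the signed measures $\omega_i(t) - a_i\delta_{Y_i(t)}$ have zero mean, and testing against a single Lipschitz function $\zeta$ with $\|\grad \zeta\|_{L^\infty}\le 1$ gives
\begin{equation*}
\int_{\Omega}\Bigl(\omega(t)-\sum_{i=1}^N a_i\delta_{Y_i(t)}\Bigr)\zeta\,dx \;=\;\sum_{i=1}^N \int_{\Omega}\bigl(\omega_i(t)-a_i\delta_{Y_i(t)}\bigr)\zeta\,dx \;\le\; \sum_{i=1}^N W_1\bigl(\omega_i(t),\,a_i\delta_{Y_i(t)}\bigr).
\end{equation*}
Taking the supremum over such $\zeta$ yields the subadditivity
$W_1(\omega(t),\sum_i a_i\delta_{Y_i(t)})\le \sum_i W_1(\omega_i(t),a_i\delta_{Y_i(t)})$.

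Next I would rewrite each summand in terms of the normalized probability measures appearing in Theorem~\ref{th2}. Since each patch has a fixed sign, $|\omega_i(t)|/|a_i|$ is a probability measure which coincides with $\omega_i(t)/a_i$ after (if necessary) flipping the common sign; using that $W_1$ is invariant under a global sign change of both arguments and homogeneous of degree one under positive scalar multiplication of both arguments, I obtain
\begin{equation*}
W_1\bigl(\omega_i(t),\,a_i\delta_{Y_i(t)}\bigr) \;=\; |a_i|\,W_1\!\left(\frac{\omega_i(t)}{a_i},\,\delta_{Y_i(t)}\right).
\end{equation*}

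Then I would pass from $W_1$ to $W_2$. For any transport plan $\pi$ between two probability measures, Cauchy--Schwarz gives $\int |x-y|\,d\pi \le (\int|x-y|^2\,d\pi)^{1/2}$, so $W_1\le W_2$ holds between probability measures (the $\delta_{Y_i(t)}$ have support inside $\Omega$, which is bounded, so no growth issue arises). Combined with Theorem~\ref{th2}, this gives $W_1(\omega_i(t)/a_i,\delta_{Y_i(t)})\lesssim e^{Ct}\eps$ for each $i$.

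Assembling the three steps,
\begin{equation*}
W_1\!\left(\omega(t),\sum_{i=1}^N a_i\delta_{Y_i(t)}\right) \;\le\; \sum_{i=1}^N |a_i|\,W_2\!\left(\frac{\omega_i(t)}{a_i},\,\delta_{Y_i(t)}\right) \;\lesssim\; e^{Ct}\eps,
\end{equation*}
since the $|a_i|$ and $N$ are fixed constants independent of $\eps$ by hypothesis. There is no genuine obstacle here; the only point that requires a little care is the bookkeeping of signs, ensuring that the definition $W_1(f,g)=W_1((f-g)_+,(f-g)_-)$ on the left and the probability-measure version of $W_2$ on the right are reconciled, which is handled by working patchwise and using the fixed-sign assumption on each $\bar\omega_i$.
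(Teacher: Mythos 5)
Your proposal is correct and follows essentially the same route as the paper: subadditivity of $W_1$ over the patch decomposition, homogeneity to reduce to the normalized probability measures $\omega_i(t)/a_i$, the elementary inequality $W_1\le W_2$ (Jensen/Cauchy--Schwarz), and then Theorem \ref{th2}. The only difference is that you spell out the dual-formulation and sign-bookkeeping details that the paper leaves implicit.
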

In particular, because Kantorovich--Rubinstein distances metrize weak convergence, cf.~\cite[Theorem 7.12]{Villani03}, the result can be interpreted as an estimate on the order of weak convergence of the vorticity field $\omega(t)$ towards the empirical measure $\sum_i a_i \delta_{Y_i(t)}$: For any $t\in[0,T)$,  
\[
\omega(t) \longrightarrow \sum_{i=1}^N a_i \delta_{Y_i(t)}\quad \mbox{weakly with order at most }\eps.
\]
Here, weak convergence has to be understood in the sense of weak convergence of measures. We recall that our result holds true for \emph{any} solution to the Euler equation. Thus, in the event that it turns out that the two-dimensional Euler equations are not uniquely solvable, we regain uniqueness in the singular limit $\eps\to 0$.

Alternatively, we can express the estimate in terms of the centers of vorticity
\[
X_i(t) = \frac1{a_i}\int_{\Omega} x\omega_i(t)\, dx
\]
in the following way:

\begin{theorem}\label{th3}
Under the assumptions of Theorem \ref{th2}, for any $i\in \{1,\dots,N\}$, it holds that	 
	\begin{equation*}
		|X_i(t)-Y_i(t)|\lesssim e^{C\,t}\,\varepsilon \quad\mbox{and}\quad\left|\frac{d}{dt}X_i(t)-\frac{d}{dt}Y_i(t)\right|\lesssim e^{C\,t}\,\varepsilon
	\end{equation*}
for all $t\in [0,T)$.
\end{theorem}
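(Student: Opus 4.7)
The first bound is a direct corollary of Theorem~\ref{th2}. Since each patch has fixed sign, I may assume without loss of generality that $\omega_i\ge 0$, so that $\omega_i/a_i$ is a probability density. By Jensen's inequality and the atomic formula \eqref{C1},
$$|X_i(t)-Y_i(t)|^2 = \left|\frac{1}{a_i}\int_\Omega(x-Y_i(t))\omega_i(t,x)\,dx\right|^2 \le \frac{1}{a_i}\int_\Omega |x-Y_i(t)|^2\omega_i(t,x)\,dx = W_2^2\!\left(\frac{\omega_i(t)}{a_i},\delta_{Y_i(t)}\right),$$
and combining with \eqref{C2} gives $|X_i(t)-Y_i(t)|\lesssim e^{Ct}\varepsilon$.

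For the velocity estimate the plan is to compute $\dot X_i$ explicitly and match it term by term with the right-hand side of the Kirchhoff--Routh system \eqref{eq18}. Using the transport equation $\partial_t\omega_i = -u\cdot\nabla\omega_i$, the incompressibility $\div u=0$, and the fact that $\omega_i$ vanishes near $\partial\Omega$ (so boundary contributions in the integration by parts drop),
$$\dot X_i(t) = \frac{1}{a_i}\int_\Omega u(t,x)\omega_i(t,x)\,dx.$$
Plugging in the Biot--Savart formula \eqref{eq26} and decomposing $\omega=\sum_j\omega_j$ splits $\dot X_i(t)$ into a self-interaction, pairwise interactions for $j\neq i$, a harmonic-extension piece coming from $\eta$, and circulation terms.

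The self-interaction $\frac1{a_i}\int\omega_i K*\omega_i\,dx$ vanishes by the antisymmetry $K(-z)=-K(z)$. For $j\neq i$, the separation property \eqref{C5} confines the double integral to $\{|x-y|\ge\delta/2\}$; replacing $K$ there by a globally Lipschitz truncation $\tilde K$ that agrees with $K$ on $\{|z|\ge\delta/4\}$ (so that $\mathrm{Lip}(\tilde K)\lesssim \delta^{-2}$) and applying Kantorovich--Rubinstein duality to the probability measures $\omega_i/a_i$ and $\omega_j/a_j$ yields
$$\left|\frac{1}{a_ia_j}\iint K(x-y)\omega_i(x)\omega_j(y)\,dx\,dy - K(Y_i-Y_j)\right| \lesssim \delta^{-2}\bigl(W_1(\omega_i/a_i,\delta_{Y_i})+W_1(\omega_j/a_j,\delta_{Y_j})\bigr)\lesssim e^{Ct}\varepsilon,$$
using $W_1\le W_2$. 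The circulation integrals $\int w_m\omega\,dz$ and the averages of the smooth fields $\xi_m$ against $\omega_i/a_i$ are handled by the same argument since $w_m,\xi_m$ are smooth on $\{\dist(\cdot,\partial\Omega)\ge\delta/4\}$. For the $\eta$-term, I would observe that on $\partial\Omega$ the boundary data $G*\omega-\sum_j a_j G(\cdot-Y_j)=\sum_j\int(G(\cdot-y)-G(\cdot-Y_j))\omega_j(y)\,dy$ is of size $\lesssim e^{Ct}\varepsilon$ by the same Lipschitz-duality argument (every boundary point is $\delta/2$-separated from every $\spt\omega_j$), so $\eta-\theta$ is harmonic with boundary data of that order, and interior gradient estimates for harmonic functions give $\|\nabla^\perp\eta-\nabla^\perp\theta\|_{L^\infty(\spt\omega_i)}\lesssim e^{Ct}\varepsilon$.

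Collecting the termwise errors and using the first bound $|X_i-Y_i|\lesssim e^{Ct}\varepsilon$ to replace center-of-mass evaluations by $Y_i$-evaluations in the remaining smooth functions (such as $K(\cdot-Y_j)$, $\nabla^\perp\theta$, $\xi_m$) yields the claimed velocity estimate. The most delicate step I expect is the boundary interaction: obtaining the correct $\varepsilon e^{Ct}$ order for $\nabla^\perp\eta-\nabla^\perp\theta$ requires that the Lipschitz-duality estimate on the boundary transfer cleanly through the harmonic extension, with constants depending only on the geometry (via $\delta$) and not on the possibly $L^\infty$-unbounded component of the vorticity allowed by \eqref{eq6}.
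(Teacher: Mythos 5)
Your decomposition of $\frac{d}{dt}X_i$ and the termwise comparison with the Kirchhoff--Routh vector field (vanishing self-interaction by oddness of $K$, a Lipschitz truncation of $K$ away from the origin for the pairwise terms, maximum principle plus interior gradient estimates for $\eta-\theta$, smoothness of $w_m$ and $\xi_m$ away from $\partial\Omega$) mirrors the paper's proof almost exactly. The genuine gap is where the smallness of the error terms comes from. Every one of your estimates, including the ``direct corollary'' giving $|X_i-Y_i|\lesssim e^{Ct}\varepsilon$, is fed by $W_1(\omega_j/a_j,\delta_{Y_j})\le W_2(\omega_j/a_j,\delta_{Y_j})\lesssim e^{Ct}\varepsilon$, i.e.\ by concentration around the \emph{point vortices} $Y_j(t)$ --- which is precisely the statement \eqref{C2} of Theorem \ref{th2}. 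But the paper has no independent proof of Theorem \ref{th2}: it is obtained at the very end by combining Theorem \ref{th3} with Lemma \ref{lem3} via the triangle inequality. A proof of Theorem \ref{th3} that presupposes Theorem \ref{th2} is therefore circular, and the circle cannot be broken by citation, because the only concentration estimate actually available at this stage is Lemma \ref{lem3}, which controls $W_2(\omega_j/a_j,\delta_{X_j})$ --- concentration around the centers of vorticity, not around the $Y_j$'s.

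The repair is the paper's actual mechanism. Running your termwise comparison with Lemma \ref{lem3} as input, each moment $\int|x-Y_j|\,|\omega_j|\,dx$ must be split as $\lesssim |a_j|\,W_2(\omega_j/a_j,\delta_{X_j}) + |a_j|\,|X_j-Y_j| \lesssim e^{Ct}\varepsilon + |X_j-Y_j|$, so the velocity comparison only yields
\[
\left|\frac{d}{dt}X_i(t)-\frac{d}{dt}Y_i(t)\right| \lesssim e^{Ct}\varepsilon + \sum_j |X_j(t)-Y_j(t)|,
\]
with the unknown quantities $|X_j-Y_j|$ still on the right-hand side. One then sums over $i$, uses $|\bar X_i-\bar Y_i|\lesssim\varepsilon$ (from \eqref{eq23} and \eqref{C6}), and closes with a Gronwall argument to obtain both claims of Theorem \ref{th3}; Theorem \ref{th2} follows afterwards from Theorem \ref{th3} and Lemma \ref{lem3}. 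Your analytic ingredients are all correct and survive this rearrangement essentially verbatim, but without the Gronwall step the proof as written does not stand on its own.
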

Hence, both  position and  velocity of the centers of vorticity deviate from those of the point vortex system \eqref{eq18} only by a constant of order $\eps$ as $\eps\ll1$.

We add a comment on a possible generalization of the previous results.

\begin{remark}
Keeping track on how the constants in  our estimates depend on the separation scale $\delta$, we find that $T\gtrsim \delta^2$ and the constants $C$ in the exponential rates  in our main results grow as $1/\delta^2$. The results in Theorems \ref{th2} and \ref{th3} and Corollary \ref{cor2} can thus be generalized via iteration to the setting in which $\delta$ in \eqref{C5} and \eqref{C12} is arbitrarily small but finite and independent of the initial separation scale in \eqref{C9}. As a consequence, our result applies essentially up to the time at which vortex patches or the idealized point vortices collide.
\end{remark}

We also remark that the Kirchhoff--Routh system \eqref{eq18} features leapfrogging in  simple geometric situations, for instance,  if $\Omega$ is a ball. Indeed, if we place two vortices of equal sign (for simplicity) into this ball and both vortices are located sufficiently close to each other, the vortices start spinning around each other while traveling along the domain boundary. Conditions for leapfrogging in the case of the half-plane were already computed by Hicks \cite{Hicks22}. (In fact, Hicks studies the problem of four vortices in $\R^2$, whose location is  symmetric with respect to one axis. Therefore, Hicks computations also apply to the half-plane problem, if the two vortices outside the half-plane are treated as mirror vortices.) To the best of our knowledge, the present paper is the first to rigorously derive leapfrogging dynamics for the Euler equation. Yet, the available techniques seem not to be sophisticated enough in order to study the much more interesting leapfrogging problem for vortex rings, that we mentioned in the introduction.

We conclude this section with a comment on possible extensions to the vortex-wave system introduced in \cite{MarchioroPulvirenti91_2}.	In the vortex-wave system, point vortices coexist with a smoother background vorticity, and the corresponding velocity is generated by both components. Convergence from the  Euler vorticity equations to the vortex-wave system was proved in \cite{MarchioroPulvirenti93} using the same techniques that led to the convergence towards the standard point vortex model. We believe that our method would allow for an analogous \emph{qualitative} result in our setting. However, it seems unlikely that we obtain any reasonable quantitative results, since to prove convergence of the background vorticity more general stability estimates for the Euler equations would be needed. We recall that these are currently available only in the Yudovich class of bounded vorticities (cf.~\cite{Loeper06}) and lead also in other situations only to presumably suboptimal estimates, see, e.g., \cite{Chemin96,Seis20}.

The remainder of the article is devoted to the proofs.

\section{Proofs}\label{S3}

We first show how Theorem \ref{th2} implies Corollary \ref{cor2}.

\begin{proof}[Proof of Corollary \ref{cor2}] We apply metric properties of the Kantorovich--Rubinstein distance $W_1$ to the effect that
\begin{equation*}
    \begin{split}
    W_1\left(\omega(t),\sum_i a_i\delta_{Y_i(t)}\right) &\le \sum_i W_1(\omega_i (t) ,a_i \delta_{Y_i}) = \sum_i |a_i| W_1\left(\frac{\omega_i(t)}{a_i},\delta_{Y_i(t)}\right) \\
    & = \sum_i |a_i| \int_{\Omega} |x-Y_i(t)|\,\frac{\omega_i(t,x)}{a_i}\,dx.
    \end{split}
\end{equation*}
It remains to use Jensen's inequality to observe that $W_1\le W_2$ and the statement of the corollary follows from Theorem \ref{th2}.
\end{proof}


We will now turn to the proofs of Theorems \ref{th2} and \ref{th3} simultaneously. Our overall strategy is strongly inspired by that of Marchioro and Pulvirenti \cite{MarchioroPulvirenti83}. However, in order to be able to derive the statement for \emph{any} solution (recall that uniqueness of solutions is not known in the regularity setting under consideration), we will not follow the regularization procedure performed in the original paper, but we approach the problem in a more direct way. Yet, most of the individual lemmas that we derive in the following have their analogues in \cite{MarchioroPulvirenti83}.

Notice that in view of \eqref{C6}, we may always assume that \eqref{eq23} holds with $\bar Y_i$ replaced by $\bar X_i$. 
We will first establish concentration estimates around the centers of vorticity.
We start by introducing some notation.   

Using the vortex patch decomposition $\omega=\sum_{i=1}^N \omega_i$ in the Biot--Savart law \eqref{eq26}, the fluid velocity can be written as
\begin{equation*}
	u=\sum_{i=1}^N K*\omega_i + \nabla^\perp\eta  +\sum_{m=1}^M\left(\int_{\Omega} w_m\omega\,dz+\gamma_m\right)\xi_m =:\sum_{i=1}^N u_i +u^b,
\end{equation*}
where $u_i:=K*\omega_i$ is the velocity generated by the $i$th patch, and $u^b:=\nabla^\perp\eta+u^h$ the velocity generated by the interaction with the boundary, in which 
\begin{equation*}
	u^h:=\sum_{m}\left(\int_{\Omega} w_m \omega \,dz+\gamma_m\right)\xi_m
\end{equation*}
is the term due to the presence of the holes. We furthermore write $\Omega_i(t) := \spt\omega_i(t)$ and $\bar{\Omega}_i := \spt\bar{\omega}_i$.

 Our first concern is a control of the velocity field generated by the $j$th patch in $\Omega_i(t)$.

\begin{lemma}\label{lem1}
	Let $i\in\{1,\ldots,N\}$ be given. Then for any $j\neq i$ it holds that
	\begin{equation*}
		\|u_j(t)\|_{C^{0,1}(\Omega_i(t))}\lesssim 1 \quad\mbox{for any } t\in[0,T).
	\end{equation*}
\end{lemma}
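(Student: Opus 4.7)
The estimate rests on two ingredients: the quantitative separation \eqref{C5} between the supports $\Omega_i(t)$ and $\Omega_j(t)$, and the scale-invariant control $\|\omega_j(t)\|_{L^1(\Omega)} = |a_j|$ that follows from \eqref{C4} together with the sign assumption on each patch. Crucially, no use of the unbounded $L^p$ or $L^\infty$ bounds in \eqref{eq6} is needed here, since $K$ is smooth away from the origin and the observation point $x \in \Omega_i(t)$ is a fixed $\delta/2$ away from every $y$ in the support of $\omega_j(t)$.

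I would first derive the $L^\infty$ bound. For $x \in \Omega_i(t)$, the identity $u_j(t,x) = \int_{\Omega_j(t)} K(x-y)\omega_j(t,y)\,dy$ together with $|K(z)| = 1/(2\pi|z|)$ and $|x-y| \ge \delta/2$ gives
\[
|u_j(t,x)| \,\le\, \frac{1}{\pi \delta}\int_{\Omega_j(t)} |\omega_j(t,y)|\,dy \,=\, \frac{|a_j|}{\pi \delta}.
\]

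For the Lipschitz seminorm, let $x, x' \in \Omega_i(t)$ and split into cases. If $|x-x'| \le \delta/4$, then the entire segment $[x,x']$ stays within distance $\delta/2 - \delta/4 = \delta/4$ of $\Omega_j(t)$, so the pointwise bound $|\nabla K(z)| \le C/|z|^2$ yields
\[
|K(x-y)-K(x'-y)| \,\le\, |x-x'|\sup_{z\in[x,x']}|\nabla K(z-y)| \,\le\, \frac{16 C}{\delta^2}|x-x'|
\]
for every $y \in \Omega_j(t)$. If instead $|x-x'| > \delta/4$, we simply use the crude bound $|K(x-y) - K(x'-y)| \le 4C/\delta \le (16C/\delta^2)|x-x'|$. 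In both cases, integrating against $|\omega_j(t,y)|\,dy$ and invoking $\|\omega_j(t)\|_{L^1} = |a_j|$ produces $|u_j(t,x) - u_j(t,x')| \lesssim |x-x'|$, with a constant depending only on $\delta$ and $|a_j|$. Combined with the $L^\infty$ estimate, this closes the bound.

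No real obstacle arises: the only mild technical point is that $\Omega_i(t)$ need not be convex, which is why I would split by the size of $|x-x'|$ rather than applying the mean value theorem directly on $\Omega_i(t)$. Note that the argument uses nothing beyond the separation and the $L^1$ mass, so the constant in the bound is independent of $\eps$, as required.
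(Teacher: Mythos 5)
Your proof is correct and follows essentially the same route as the paper: the $C^0$ bound via $|K(z)|=1/(2\pi|z|)$ and the separation \eqref{C5}, and the Lipschitz bound via the $\delta^{-2}$ Lipschitz constant of $K$ away from the origin, integrated against $\|\omega_j(t)\|_{L^1}=|a_j|$. Your case split on $|x-x'|$ merely makes explicit the non-convexity point that the paper subsumes in the assertion that $K$ is Lipschitz on $B_{\delta/2}(0)^c$ with constant of order $\delta^{-2}$.
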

\begin{proof}
 Let us prove first the $C^0$ bound. By the definition of $T $ in \eqref{C5}, we have that $|x-y|\ge\delta/2$ for any $x\in\Omega_i(t)$ and $y\in\Omega_j(t)$, and thus, by the definition of $u_j$ and $K$,
	\begin{equation*}
		|u_j(t,x)|\le\frac{1}{\pi\delta}\int_{\Omega}|\omega_j(t,y)|\,dy=\frac{|a_j|}{\pi\delta}\sim 1.
	\end{equation*}
	
Now to the Lipschitz bound. Since $K$ is  Lipschitz on $B_{\delta/2}(0)^c$ with Lipschitz constant of order $\delta^{-2}$, we have for any $x,\,z\in\Omega_i(t)$,
	\begin{equation*}
		|u_j(t,x)-u_j(t,z)|\le \int_{\Omega}\left|K(x-y)-K(z-y)\right| |\omega_j(t,y)|\,dy \lesssim \frac{|a_j|}{\delta^2}|x-z|\sim |x-z|.
	\end{equation*}
This concludes the proof.
\end{proof}

We now show that the velocity field induced by the boundary interaction is bounded uniformly in the support of the vortex patch.

\begin{lemma}\label{lem2}
	Let $i\in\{1,\ldots,N\}$ be given. Then
	\begin{equation*}
		\|u^b(t)\|_{C^{0,1}(\Omega_i(t))}\lesssim 1 \quad\mbox{for any } t\in[0,T).
	\end{equation*}
\end{lemma}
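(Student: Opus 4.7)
My plan is to handle the two pieces of $u^b = \nabla^\perp \eta + u^h$ separately, using the maximum principle for harmonic functions together with the fact that $\Omega_i(t)$ sits at distance at least $\delta/2$ from $\partial\Omega$ by \eqref{C5}.

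For the harmonic-hole contribution $u^h = \sum_m \bigl(\int_\Omega w_m \omega\,dz + \gamma_m\bigr) \xi_m$, I would first argue that the scalar coefficients are bounded uniformly in $\eps$ and $t$. Since every patch $\omega_i$ retains its initial sign under transport (DiPerna--Lions renormalization) and $\int_\Omega |\omega_i(t)|\,dx = |a_i|$ by \eqref{C4}, we get $\|\omega(t)\|_{L^1(\Omega)} \le \sum_i |a_i| \lesssim 1$. Combined with the maximum principle giving $0 \le w_m \le 1$, this yields $|\int w_m \omega\,dz| \lesssim 1$, while $\gamma_m$ is a fixed constant. The vector fields $\xi_m$ are harmonic (divergence- and curl-free) on the smooth domain $\Omega$, hence $C^\infty$ in the interior, so $\|\xi_m\|_{C^{0,1}(K_\delta)} \lesssim 1$ on the compact set $K_\delta := \{x \in \Omega : \dist(x,\partial\Omega) \ge \delta/2\} \supset \Omega_i(t)$. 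Multiplying these bounds gives $\|u^h(t)\|_{C^{0,1}(\Omega_i(t))} \lesssim 1$.

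For $\nabla^\perp \eta$, the key step is to show $\|\eta(t)\|_{L^\infty(\Omega)} \lesssim 1$ via the boundary data $G * \omega(t)$. For any $x \in \partial\Omega$ and any $y \in \spt\omega(t)$, \eqref{C5} gives $|x-y| \ge \delta/2$, and the boundedness of $\Omega$ bounds $|x-y|$ from above, so the logarithmic kernel satisfies $|G(x-y)| \lesssim 1$ uniformly. Integrating against $\omega(t)$ and using the $L^1$ bound above yields $\|G * \omega(t)\|_{L^\infty(\partial\Omega)} \lesssim 1$, and then the maximum principle applied to the harmonic $\eta(t)$ propagates this bound into all of $\Omega$. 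Finally, since any $x \in \Omega_i(t)$ satisfies $B_{\delta/4}(x) \subset \Omega$, standard interior gradient and Hessian estimates for harmonic functions give $|\nabla \eta(t,x)| + |\nabla^2 \eta(t,x)| \lesssim \delta^{-2} \|\eta(t)\|_{L^\infty(\Omega)} \lesssim 1$, which in turn implies $\|\nabla^\perp \eta(t)\|_{C^{0,1}(\Omega_i(t))} \lesssim 1$. Combining the two bounds completes the proof.

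The only mildly subtle point is making sure that the unboundedness of $\omega$ is truly harmless here. This works because $u^b$ only feels $\omega$ through integrals against bounded regular kernels ($w_m$ on $\Omega$, $G$ on $\partial\Omega \times \spt\omega$), so the dangerous $L^p$-but-not-$L^\infty$ behavior never appears — what enters is only the $L^1$ mass, which is scale-independent thanks to the fixed-sign assumption on the patches.
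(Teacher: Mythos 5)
Your proof is correct and follows essentially the same route as the paper: the $L^1$ bound on $\omega$ combined with the maximum principle for $\eta$ and $w_m$, interior estimates for harmonic functions, and smoothness of the $\xi_m$ away from $\partial\Omega$. If anything, your explicit inclusion of the Hessian bound on $\eta$ (needed for the Lipschitz seminorm of $\nabla^\perp\eta$) is slightly more careful than the paper, which only states the gradient bound.
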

\begin{proof}
	Because the support of $\omega$ has a distance at least $\delta/2$ to the boundary, it is clear that $G\ast\omega$  
	is smooth on $\partial\Omega$. 
 Moreover for $x\in\partial\Omega$, it holds that
		\begin{equation*}
			|G*\omega(x)|\lesssim\max\left\{\left|G\left(\frac{\delta}{2}\right)\right|,|G(\text{diam}\,\Omega)|\right\}\lesssim 1.
		\end{equation*}
By the maximum principle for harmonic functions, we deduce that $\|\eta\|_{L^\infty(\Omega)}\lesssim 1$, and, by standard estimates for harmonic functions, this bound carries over to the gradient on any compact  subset of $\Omega$.  
	In particular, by the definition of $T$ in \eqref{C5}, 
	\begin{equation*}
		\|\nabla\eta\|_{L^\infty(\Omega_i(t))}\lesssim 1.
	\end{equation*}

Let us turn our attention to the velocity contribution  $u^h$ generated by the inner holes. We have
    \begin{equation*}
    	\sum_{m=1}^{M}\left| \int_{\Omega}w_m\omega\,dx \right|\le \|w_m\|_{L^\infty}\,\|\omega\|_{L^1}\lesssim 1,
    \end{equation*}
    because the harmonic measures $w_m$ are bounded independently of $\varepsilon$ by the maximum principle for harmonic functions. Since the same is also true for the vector fields $\xi_m$ and, by assumption, the circulations $\gamma_m$, there holds
    \begin{equation*}
    	\|u^h\|_{L^\infty(\Omega_i(t))}\lesssim 1.
    \end{equation*}
    
    Moreover, for $x,y\in\Omega_i(t)$ we have
    \begin{equation*}
    	|u^h(x)-u^h(y)|\le \sum_{m=1}^{M}\left| \int_{\Omega}w_m\omega\,dx +\gamma_m\right| |\xi_m(x)-\xi_m(y)|\lesssim |x-y|,
    \end{equation*}
    because of the smoothness of the $\xi_m$'s on the compact set $\bar{\Omega}_i(t)$. This completes the proof.

\end{proof}

These two first lemmas give us bounds on those velocity contributions that are   generated by far vortex patches and  the interaction with the boundary (the so called far-field), but we still know nothing about the velocity generated by the patch itself (the so called near-field). What we have for the moment, however, is already sufficient to prove concentration in terms of the $2$-Wasserstein distance around the centers of vorticity $X_i(t)$. 
Before stating the next lemma, we compute their velocity. Notice first that there is no self-induced motion,
\[
\int_{\Omega} u_i(t,x)\,\omega_i(t,x)\,dx = \iint_{\Omega\times\Omega} K(x-y)\omega_i(x)\omega_i(y)\, dxdy = 0,
\]
because the Biot--Savart kernel on $\R^2$ is odd,  that is, $K(z)=-K(-z)$. Hence, a direct computation reveals that
\begin{equation}\label{eq19}
	\frac{d}{dt}X_i(t)=\frac{1}{a_i}\int_{\Omega} u(t,x)\,\omega_i(t,x)\,dx  =\frac{1}{a_i}\int_{\Omega} F_i(t,x)\,\omega_i(t,x)\,dx  ,
\end{equation}
where
\begin{equation*}
	F_i(t,x):=\sum_{j\neq i}u_j(t,x)+u^b(t,x)=u(t,x)-u_i(t,x)
\end{equation*}
is the velocity far-field associated with the $i$th vorticity patch.

We now turn to the key concentration lemma, that was already found in \cite{MarchioroPulvirenti83}.

\begin{lemma}[\cite{MarchioroPulvirenti83}]\label{lem3}
	Let $i\in\{1,\ldots,N\}$  be given. There exists a constant $C<\infty$ dependent only on $\delta$ such that
	\begin{equation*}
    		W_2\left(\frac{\omega_i(t)}{a_i},\,\delta_{X_i(t)}\right)\le e^{C\,t}\,\varepsilon \quad\mbox{for any  }t\in[0,T).
	\end{equation*}
\end{lemma}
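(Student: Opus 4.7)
The plan is to identify the squared 2-Wasserstein distance with the second-moment functional
\[
I_i(t) := \frac{1}{a_i}\int_{\Omega} |x-X_i(t)|^2\, \omega_i(t,x)\, dx,
\]
which is legitimate by \eqref{C1} since each patch has fixed sign (so $\omega_i/a_i$ is a nonnegative probability density). It then suffices to differentiate $I_i$ in time and close a Gronwall-type inequality with initial datum $I_i(0)\le \eps^2$, where the bound on the initial data follows from \eqref{eq23} combined with the minimality property \eqref{C6}.

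First I would compute $\frac{d}{dt}I_i$. The contribution coming from the time derivative of $X_i(t)$ vanishes by the very definition of the center of vorticity, since $\int_{\Omega}(x-X_i)\omega_i\, dx=0$. Using the transport equation $\partial_t \omega_i + \div(u\omega_i)=0$ (which is valid in the renormalized/distributional sense here) and integrating by parts, the remaining piece becomes
\[
\frac{d}{dt}I_i(t) = \frac{2}{a_i}\int_{\Omega} (x-X_i(t))\cdot u(t,x)\, \omega_i(t,x)\, dx.
\]
Now I would split $u = u_i + F_i$ according to the self-induced velocity and the far-field, as in \eqref{eq19}.

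The far-field contribution is handled by the Lipschitz estimates from Lemma \ref{lem1} and Lemma \ref{lem2}: since $F_i(t,\cdot)$ is Lipschitz on $\Omega_i(t)$ with a constant bounded uniformly in $\eps$ and $t$, I can subtract the constant $F_i(t,X_i(t))$ (whose contribution vanishes because $\int (x-X_i)\omega_i\,dx = 0$) and estimate
\[
\frac{2}{a_i}\int_{\Omega} (x-X_i)\cdot \bigl(F_i(t,x)-F_i(t,X_i(t))\bigr)\omega_i(t,x)\, dx \;\le\; 2\|F_i(t)\|_{C^{0,1}(\Omega_i(t))}\, I_i(t) \;\lesssim\; I_i(t).
\]
The self-induced part is the key point: I would show that it contributes zero. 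Writing it out as a double integral against $K$, using that $X_i\cdot \int u_i \omega_i\, dx = 0$ by oddness of $K$, and then symmetrizing in $(x,y)$, we reduce to
\[
\frac{1}{a_i^2}\iint (x-y)\cdot K(x-y)\, \omega_i(t,x)\omega_i(t,y)\, dx\, dy,
\]
which vanishes because $z\cdot K(z)=z\cdot z^\perp/(2\pi|z|^2)=0$. This is the structural feature that makes the Marchioro--Pulvirenti-type argument work; the main potential obstacle (the self-interaction of the possibly unbounded vortex patch) is thus neutralized purely by the geometry of the Biot--Savart kernel, without requiring any $L^\infty$ bound on $\omega_i$.

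Combining these two estimates yields $\frac{d}{dt}I_i(t)\le C\, I_i(t)$ for a constant $C$ depending only on $\delta$. Gronwall's inequality then gives $I_i(t)\le e^{Ct} I_i(0)\le e^{Ct}\eps^2$, and taking the square root produces the claimed bound $W_2(\omega_i(t)/a_i,\delta_{X_i(t)})\le e^{Ct/2}\eps$, which is of the form stated (after renaming the constant). The only slightly delicate point worth double-checking is the justification of the integration by parts and of the identity $\frac{d}{dt}\int (x-X_i)^2\omega_i \,dx = -\int 2(x-X_i)\cdot u\,\omega_i \,dx + \text{(boundary/regularity terms)}$: since $u\in L^\infty$, $\omega_i\in L^\infty_t L^p_x$ is compactly supported away from $\partial\Omega$ by \eqref{C5}, and $\omega_i$ is transported by the regular Lagrangian flow, one can either argue directly on the Lagrangian side via $\omega_i(t,\phi_t(x)) = \bar\omega_i(x)$ or use the renormalization property to take $\beta(\omega_i) = \omega_i$ against the cut-off test function $|x-X_i(t)|^2$ supported in a neighborhood of $\Omega_i(t)$.
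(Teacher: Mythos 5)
Your proof is correct and follows essentially the same route as the paper: identify $W_2^2$ with the second moment, differentiate in time, kill the self-interaction by the antisymmetry of $K$ together with $z\cdot K(z)=0$, bound the far-field via the Lipschitz estimates of Lemmas \ref{lem1} and \ref{lem2}, and conclude by Gronwall. The only (minor) divergence is that you subtract the constant $F_i(t,X_i(t))$ where the paper substitutes the patch average $\frac{1}{a_i}\int F_i\,\omega_i\,dy=\frac{d}{dt}X_i$, which produces a double integral in which both arguments of $F_i$ lie in $\spt\omega_i(t)$ --- a marginally safer choice, since the Lipschitz bounds of Lemmas \ref{lem1} and \ref{lem2} are stated on $\Omega_i(t)$ and $X_i(t)$ is a priori only in its convex hull.
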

\begin{proof}
We recall from \eqref{C1} that the $2$-Wasserstein distance reduces to a simple second moments function if one of the marginals is an atomic measure, that is
	\begin{equation*}
		W_2(t): = W_2\left(\frac{\omega_i(t)}{a_i},\,\delta_{X_i(t)}\right)=\left(\frac{1}{a_i}\int_{\Omega}|x-X_i(t)|^2\,\omega_i(t,x)\,dx\right)^{1/2}.
	\end{equation*}
Computing its time derivative, we get (forgetting about the $t$'s),
	\begin{equation*}
		\begin{split}
		\frac{d}{dt}W_2^2&=\frac{2}{a_i}\int_{\Omega} (x-X_i)\cdot \left(u(x)-\frac{dX_i}{dt}\right)\,\omega_i(x)\,dx \\
		&=\frac{2}{a_i}\int_{\Omega} (x-X_i)\cdot u_i(x)\,\omega_i(x)\,dx+\frac{2}{a_i}\int_{\Omega} (x-X_i)\cdot \left(F_i(x)-\frac{dX_i}{dt}\right)\,\omega_i(x)\,dx.
		\end{split}
	\end{equation*}
	The first integral vanishes because $K$ is odd, while into the second one we plug the expression for the derivative of $X_i$ in \eqref{eq19},
	\begin{equation*}
		\frac{d}{dt}W_2^2=\frac{2}{a_i^2}\iint_{\Omega\times\Omega}(x-X_i)\cdot \big(F_i(x)-F_i(y)\big)\,\omega_i(x)\,\omega_i(y)\,dy\,dx.
	\end{equation*}
	Since we are considering times $t\le T$, from Lemmas \ref{lem1} and \ref{lem2} we know that $F_i$ is  Lipschitz on $\Omega_i(t)$. Therefore, 	 assuming that $\omega_i\ge0$ for notational simplicity,
	\begin{equation*}
		\begin{split}
		\left|\frac{d}{dt}W_2^2\right|&\lesssim  \iint_{\Omega\times\Omega}|x-X_i|\,|x-y|\,\omega_i(x)\omega_i(y)\,dy\,dx \\
		&\lesssim \int_{\Omega}|x-X_i|^2\,\omega_i(x)\,dx+ \iint_{\Omega\times\Omega} |x-X_i|\,|y-X_i| \omega_i(x)\omega_i(y)\,dy\,dx \\
		&\lesssim W_2^2,
		\end{split}
	\end{equation*}
	where we also used the triangle in the second and Jensen's inequality in the third estimate. Using a Gronwall argument and keeping in mind that by hypothesis \eqref{eq23} and \eqref{C6} the initial Wasserstein distance is bounded by $\varepsilon$, we obtain our thesis.
\end{proof}

Lemma \ref{lem3} gives us the concentration result in terms of $\varepsilon\ll 1$, if we can ensure that the time $T=T_\varepsilon$ stays bounded away from $0$, i.e., $T\gtrsim 1$,  as $\varepsilon$ becomes small. To do so, we need to bound also the near-field $u_i$, so that we can estimate the velocity with which the patch $\Omega_i$ moves. We begin with a result that allows us to estimate the near velocity field of a point on the boundary of the patch in terms of its distance from the center of vorticity. 

\begin{lemma}\label{prop1}
	Let $i\in\{1,\ldots,N\}$ and $x\in\partial\Omega_i(t)$ be given. There exists a constant $C<\infty$ dependent only on $\delta$ such that
	\begin{equation}\label{C8}
		|u_i(t,x)|\lesssim 1+\frac{e^{Ct}}{|x-X_i(t)|},
	\end{equation}
	for any $t\in [0,T)$.
\end{lemma}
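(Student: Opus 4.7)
The plan is to bound the Biot--Savart integral $u_i(t,x)=K\ast\omega_i(t,x)$ directly by combining the decomposition $\omega_i=\omega_i^p+\omega_i^\infty$ with the concentration estimate of Lemma~\ref{lem3}. Setting $r=|x-X_i(t)|$, I split the integration at the scale $r/2$. On the outer region $\{y:|x-y|\ge r/2\}$ the kernel satisfies $|K(x-y)|\le 1/(\pi r)$, and using only $\|\omega_i\|_{L^1}=|a_i|\lesssim 1$, the outer contribution is controlled by $1/r$.

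The real work is on the inner ball $B_{r/2}(x)$. For $y\in B_{r/2}(x)$ the triangle inequality gives $|y-X_i(t)|\ge r/2$, so Chebyshev's inequality together with Lemma~\ref{lem3} delivers the key mass bound
\begin{equation*}
\int_{B_{r/2}(x)}\omega_i(t,y)\,dy\le\frac{4}{r^2}\int_\Omega|y-X_i(t)|^2\omega_i(t,y)\,dy\lesssim\frac{e^{Ct}\varepsilon^2}{r^2}.
\end{equation*}
(I work under the WLOG assumption $\omega_i\ge 0$ and, by truncating the initial datum at height $\varepsilon^{-2}$ if necessary, also $\omega_i^p,\omega_i^\infty\ge 0$ with $\omega_i^p,\omega_i^\infty\le\omega_i$.) For the $L^\infty$-part I perform a secondary splitting at an intermediate scale $d_1$: on $B_{d_1}(x)$ the trivial near-field estimate gives $\int \omega_i^\infty/|x-y|\,dy\lesssim\|\omega_i^\infty\|_{L^\infty}d_1\lesssim d_1/\varepsilon^2$, while on the annulus $1/|x-y|\le 1/d_1$ combined with the mass bound yields $\lesssim e^{Ct}\varepsilon^2/(d_1 r^2)$. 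Balancing at $d_1\sim e^{Ct/2}\varepsilon^2/r$ produces a total $L^\infty$-contribution of order $e^{Ct/2}/r$; this is essentially the classical Marchioro--Pulvirenti estimate.

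For the $L^p$-part I repeat the construction with a scale $d_2$. H\"older's inequality on $B_{d_2}(x)$ gives
\begin{equation*}
\int_{B_{d_2}(x)}\frac{\omega_i^p(t,y)}{|x-y|}\,dy\lesssim\|\omega_i^p\|_{L^p}\,d_2^{1-2/p}\lesssim(d_2/\varepsilon^2)^{1-2/p},
\end{equation*}
and on the annulus $B_{r/2}(x)\setminus B_{d_2}(x)$ the mass bound gives a contribution $\lesssim e^{Ct}\varepsilon^2/(d_2 r^2)$. Optimising in $d_2$ produces a total $L^p$-contribution of order $e^{C't}r^{-(p-2)/(p-1)}$. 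Since $p>2$ the exponent $(p-2)/(p-1)$ is strictly less than $1$, and $r\le\mathrm{diam}(\Omega)$ forces $r^{-(p-2)/(p-1)}\lesssim r^{-1}$. In the complementary regime of very small $r$ (where the optimal $d_2$ would exceed $r/2$) I simply take $d_2=r/2$ and verify that the resulting $(r/\varepsilon^2)^{1-2/p}$ is still controlled by $e^{Ct}/r$, which holds because the threshold $r\lesssim\varepsilon^{2(p-1)/(2p-1)}$ at which this case occurs is already much smaller than $\varepsilon^{(p-2)/(p-1)}$. Adding the three contributions yields $|u_i(t,x)|\lesssim 1+e^{Ct}/r$.

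The main challenge, and the novelty compared with the bounded-vorticity setting of Marchioro and Pulvirenti, is the two-scale estimate on the $L^p$-part: the raw bound $\|\omega_i^p\|_{L^p}\lesssim\varepsilon^{-2(1-2/p)}$ diverges as $\varepsilon\to 0$, and must be interpolated against the $L^1$ bound produced by the concentration estimate on a neighbourhood of $x$. The exponent arithmetic works out precisely thanks to $p>2$, which is what allows the $\varepsilon$-dependence to cancel and the resulting estimate to be absorbed into the universal $1/r$ behaviour.
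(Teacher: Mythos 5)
Your proof is correct and follows essentially the same strategy as the paper: split the Biot--Savart integral at the scale $|x-X_i(t)|$, control the vorticity mass near $x$ via Chebyshev and the concentration estimate of Lemma~\ref{lem3}, and interpolate the kernel singularity against the $L^p$ and $L^\infty$ bounds from \eqref{eq6} at an optimally chosen intermediate scale. The only cosmetic differences are that the paper centers its primary split at $X_i(t)$ rather than at $x$ and uses a single intermediate scale $\ell=\varepsilon^2/R$ for both the $L^p$- and $L^\infty$-parts via the interpolation inequality \eqref{eq25}, which avoids your separate optimizations and the case distinction for very small $r$.
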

\begin{proof}
	Since the time $t$ is fixed, in this proof we will frequently omit it. Setting
	\begin{equation*}
		R:=|x-X_i|,
	\end{equation*}
	we have, assuming again that $\omega_i$ is nonnegative for notational convenience, 
	\begin{equation*}
		\begin{split}
		|u_i(x)|&\le \frac{1}{2\pi}\int_{\Omega}\frac{1}{|x-y|} \omega_i(y)\,dy\\
		& \lesssim \int_{B_{R/2}(X_i)}\frac{1}{|x-y|}\omega_i(y)\,dy  +\int_{B_{R/2}(X_i)^c}\frac{1}{|x-y|}\omega_i(y)\,dy =:I_1+I_2.
		\end{split}
	\end{equation*}
	The term $I_1$ is easily bounded by $2\,|a_i|/R$, because on its domain of integration it holds that $|x-y|\ge R/2$. To bound the other  term, we make use of the interpolation-type estimate
	\begin{equation}\label{eq25}
		\int_A \frac{1}{|x-y|}\,\omega_i(y)\,dy\lesssim \ell^{1-\frac{2}{p}} \|\omega_i^p\|_{L^p(A)} + \ell \|\omega_i^\infty\|_{L^\infty(A)} + \frac{1}{\ell} \|\omega_i\|_{L^1(A)},
	\end{equation}
	that holds true for any measurable subset $A$ of $\Omega$ and any $\ell>0$. Notice that this is a refinement of the estimate
	\begin{equation}\label{C7}
	\int_A \frac1{|x-y|}\omega_i(y)\, dy \lesssim \|\omega_i\|_{L^p(A)}^{\frac{p}{2(p-1)}}\|\omega_i\|_{L^1(A)}^{\frac{p-2}{2(p-1)}},
	\end{equation}
	that holds for $p>2$ and that, to the best of our knowledge, has been first derived in \cite{Iftimie99}. Indeed, choosing $\omega_i^\infty=0$ in \eqref{eq25} and optimizing in $\ell$ yields \eqref{C7}. 
	We postpone the simple proof of \eqref{eq25} and proceed with the estimate of $I_2$. Keeping in mind Lemma \ref{lem3}, we have
	\[
	\|\omega_i\|_{L^1(B_{R/2}(X_i)^c)} \lesssim \frac{1}{R^2} W_2\left(\frac{\omega_i}{a_i},\delta_{X_i}\right)^2\lesssim \frac{\eps^2}{R^2}e^{Ct},
	\]
	and thus, by assumption \eqref{eq6}, 
	\begin{equation*}
	    I_2\lesssim  
	     \frac{\ell^{\left(1-\frac{2}{p}\right)}}{\eps^{2\left(1-\frac{2}{p}\right)}} + \frac{\ell}{\eps^2} + \frac{\eps^2}{\ell\,R^2} e^{Ct}.
	\end{equation*}
	Setting $\ell=\eps^2/R$, the statement in \eqref{C8} follows.
	
	It remains to provide the argument for \eqref{eq25}, in which we drop the index $i$ for convenience.   We  decompose
	\begin{align*} 
\int_A \frac{1}{|x-y|}\,\omega(y)\,dy
		&=\int_{A\cap B_\ell(x)} \frac{1}{|x-y|}\,\omega^p(y)\,dy \\
		&\qquad+ \int_{A\cap B_\ell(x)} \frac{1}{|x-y|}\,\omega^\infty(y)\,dy + \int_{A\setminus B_\ell(x)} \frac{1}{|x-y|}\,\omega(y)\,dy.
	\end{align*}
	The first two integrals can be estimated with the help of the H\"older inequality ($p'$ being the conjugate exponent to $p$) and a change of variables,
	\[
	\int_{A\cap B_\ell(x)} \frac{1}{|x-y|}\,\omega^p(y)\,dy \le \left(\int_{B_\ell(0)}\frac{1}{|y|^{p'}}\,dy\right)^\frac{1}{p'} \|\omega^p\|_{L^p(A)}\lesssim \ell^{1-\frac{2}{p}} \|\omega^p\|_{L^p(A)}
	\]
	because $p>2$, and similarly for the second integral. For the third integral, we simply observe that
	\[
	\int_{A\setminus B_\ell(x)} \frac{1}{|x-y|}\,\omega(y)\,dy \le \frac{1}{\ell}\|\omega\|_{L^1(A)}.
	\]
	This concludes the proof.
\end{proof}

\begin{remark}
	We remark here that a small modification of the proof would yield the same control of the velocity field  if $\omega_i$ belonged to   $L^{\infty}((0,T);\text{BMO})$ with
	\begin{equation}\label{eq24}
		\|\omega_i(t)\|_{BMO}\lesssim \varepsilon^{-2}\quad\mbox{for all }t\in[0,T).
	\end{equation}
We omit this case here, because
it is in general not known if the $BMO$-norm is preserved in time, and therefore assumption \eqref{eq24} may not be attainable under general assumptions on the initial datum. We refer to \cite{BernicotKeraani14} for (optimal) estimates on the $\text{BMO}$ norm for the two-dimensional Euler equations.
 \end{remark}

We still need one more ingredient in order to bound $T$ from below uniformly in $\varepsilon$. With Lemma \ref{prop1} we have estimated the near velocity field on boundary points in terms of their distance from the center of vorticity. Should this distance vanish in $\varepsilon$, then this result would be useless. We need therefore to estimate also the velocity of the center of vorticity. In this way, we will be able to bound the velocity of all points on the boundary of the patch (and therefore of the patch itself): If the points stay away from the center, then we use Lemma \ref{prop1}, otherwise we use the bound on the center of vorticity.

\begin{lemma}\label{lem5}
	Let $i\in\{1,\ldots,N\}$ be given. Then
	\begin{equation*}
		|X_i(t)-\bar{X}_i|\lesssim t \quad\mbox{for any } t\in[0,T).
	\end{equation*}
\end{lemma}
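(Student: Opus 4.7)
The plan is to bound the time derivative $\frac{d}{dt}X_i(t)$ uniformly in $\varepsilon$ and $t$, and then integrate. The key identity is \eqref{eq19}, which, combined with the oddness of $K$ (that made the self-induced contribution vanish), gives
\[
\frac{d}{dt} X_i(t) = \frac{1}{a_i}\int_{\Omega} F_i(t,x)\,\omega_i(t,x)\,dx,
\]
where $F_i = \sum_{j\neq i} u_j + u^b$ is the far-field velocity acting on the $i$-th patch.

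First I would observe that, at any fixed $t\in[0,T)$, the integrand is supported in $\Omega_i(t)$ where the far-field is under control. Indeed, by Lemma \ref{lem1} applied to each $j\neq i$ and by Lemma \ref{lem2} for the boundary contribution $u^b$, we have
\[
\|F_i(t)\|_{L^{\infty}(\Omega_i(t))} \;\le\; \sum_{j\neq i}\|u_j(t)\|_{L^{\infty}(\Omega_i(t))} + \|u^b(t)\|_{L^{\infty}(\Omega_i(t))} \;\lesssim\; 1,
\]
with a constant depending only on $\delta$ and the (fixed) intensities $a_j$, circulations $\gamma_m$, and geometry. Second, since each patch has a fixed sign preserved by the transport equation (recall the renormalization property of DiPerna--Lions solutions), $\int_{\Omega}|\omega_i(t,x)|\,dx = |a_i|$ for all $t\in[0,T)$.

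Combining these two facts yields
\[
\left|\frac{d}{dt}X_i(t)\right| \;\le\; \frac{1}{|a_i|}\|F_i(t)\|_{L^{\infty}(\Omega_i(t))}\int_{\Omega}|\omega_i(t,x)|\,dx \;\lesssim\; 1
\]
uniformly on $[0,T)$. Integrating from $0$ to $t$ and using $X_i(0)=\bar X_i$ then gives the desired estimate $|X_i(t)-\bar X_i|\lesssim t$.

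There is really no serious obstacle here: the lemma is essentially a direct consequence of the two a priori bounds on the far-field already established in Lemmas \ref{lem1} and \ref{lem2}, together with conservation of the $L^1$ norm of each signed patch. The only point to keep in mind is that the self-interaction term in the velocity of the center of vorticity drops out thanks to the oddness of $K$, which is exactly what allows us to avoid the much harder near-field estimate of Lemma \ref{prop1} at this stage.
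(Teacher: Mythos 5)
Your argument is correct and coincides with the paper's proof: both use the identity \eqref{eq19}, the uniform far-field bounds from Lemmas \ref{lem1} and \ref{lem2} on $\Omega_i(t)$, and the conservation $\|\omega_i(t)\|_{L^1(\Omega)}=|a_i|$, then integrate in time. No gaps.
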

\begin{proof}
	Keeping in mind the expression for the velocity of the vorticity centers \eqref{eq19}, we simply compute
	\begin{equation*}
		|X_i(t)-\bar{X}_i|=\left|\int_{0}^{t}\frac{d}{ds}X_i(s)\,ds\right|=\frac{1}{|a_i|}\left|\int_{0}^{t}\int_{\Omega} F_i(s,x)\,\omega_i(s,x)\,dx\,ds\right| 
		\lesssim  t,
	\end{equation*}
	because we know by Lemmas \ref{lem1} and \ref{lem2} that $F_i$ is bounded  on $\Omega_i(t)$ uniformly in $\eps$ for times $t\in [0,T)$ and $\|\omega_i\|_{L^1(\Omega)} = |a_i| $.
\end{proof}

We are now ready to bound $T$ from below uniformly in $\eps$.
\begin{lemma}\label{lem6}
	It holds that $		T\gtrsim 1$.
	\end{lemma}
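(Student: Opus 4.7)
The plan is to establish an $\eps$-independent lower bound on $T$ by showing that every trajectory $y(t) = \phi_t(y_0)$ starting at $y_0 \in \spt \bar\omega_i$ remains close to its initial position on a time interval $[0, T_0]$ with $T_0 \gtrsim 1$. Once this is done, the initial separation \eqref{C9} immediately propagates to the weaker separation required by \eqref{C5} and to the boundary distance required by \eqref{C12}, so that $T \ge T_0$.

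The central quantity is $\rho(t) := |y(t) - X_i(t)|$. Writing $u = u_i + F_i$ and using \eqref{eq19}, I would differentiate to obtain
\[
\rho\,\dot\rho = (y - X_i)\cdot u_i(y) + (y - X_i)\cdot (F_i(y) - \dot X_i).
\]
For the near-field term, Lemma \ref{prop1} gives $|u_i(y)| \lesssim 1 + e^{Ct}/\rho$, and multiplying through by $\rho$ absorbs the $1/\rho$ singularity, yielding $|(y - X_i)\cdot u_i(y)| \lesssim \rho + e^{Ct}$. For the far-field term, I would decompose $F_i(y) - \dot X_i = (F_i(y) - F_i(X_i)) + (F_i(X_i) - \dot X_i)$, bounding the first summand by $C\rho$ via the Lipschitz estimates in Lemmas \ref{lem1}--\ref{lem2}, and the second by $C\eps\, e^{Ct}$ by applying Jensen to the representation \eqref{eq19} of $\dot X_i$ and invoking Lemma \ref{lem3}. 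Combining and using Young's inequality to absorb the linear-in-$\rho$ terms delivers
\[
\frac{d}{dt}\rho^2 \lesssim \rho^2 + 1 + e^{Ct}.
\]

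Gronwall's lemma then produces $\rho(t)^2 \lesssim (\rho(0)^2 + 1)\,e^{Ct}$, uniformly in $\eps$ on any bounded time interval. Combining this with Lemma \ref{lem5} via the triangle inequality,
\[
|y(t) - y_0| \le \rho(t) + |X_i(t) - \bar X_i| + \rho(0) \lesssim \rho(0) + \sqrt{t} + t,
\]
and under the assumed concentration of the initial vorticity, which keeps $\rho(0)$ much smaller than $\delta$, this displacement can be forced below $\delta/4$ for every $y_0 \in \spt \bar\omega_i$ by choosing $t \le T_0$ with $T_0\gtrsim 1$ independent of $\eps$. Taking the minimum of these times over the finitely many indices $i$ yields the desired uniform lower bound on $T$. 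The main obstacle is precisely the singular behaviour of the self-induced velocity near the vortex centre: a naive pointwise estimate $|y(t) - y_0| \le \int_0^t |u(s,y(s))|\,ds$ would diverge as $\eps \to 0$ for trajectories approaching $X_i(t)$, and it is essential to work with the radial quantity $\rho\,\dot\rho$, in which the explicit factor $|y - X_i|$ exactly cancels the $1/|y - X_i|$ singularity of the velocity bound from Lemma \ref{prop1}.
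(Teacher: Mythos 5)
Your central device --- differentiating $\rho(t)^2=|y(t)-X_i(t)|^2$ so that the explicit factor $|y-X_i|$ cancels the $1/|x-X_i|$ singularity of Lemma \ref{prop1}, and closing with Gronwall --- is sound, and it is a genuinely different organization from the paper, which instead tracks a single excursing particle and runs a case analysis on its distance to the vorticity center. The resulting differential inequality is correct; note only that the Gronwall output you display, $\rho(t)^2\lesssim(\rho(0)^2+1)e^{Ct}$, is strictly weaker than the bound $\rho(t)\lesssim\rho(0)+\sqrt{t}$ that you use in the next line --- you need the sharper form $\rho(t)^2\lesssim \rho(0)^2e^{Ct}+t\,e^{Ct}$, which the inequality does deliver.

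The genuine gap is in the final step. You assert that the concentration assumption ``keeps $\rho(0)$ much smaller than $\delta$''. This is false: \eqref{eq23} is only a second-moment bound, and together with \eqref{eq6} it is perfectly compatible with $\spt\bar\omega_i$ containing points at distance of order one from $\bar X_i$ (a satellite blob of mass $\eps^2$ at unit distance contributes $O(\eps^2)$ to $W_2^2$ and is admissible in $L^p$); the only control on the support is the separation \eqref{C9}. For a particle with $\rho(0)\gtrsim\delta$, your triangle inequality $|y(t)-y_0|\le\rho(t)+|X_i(t)-\bar X_i|+\rho(0)$ already costs $2\rho(0)\gtrsim\delta$ at $t=0$, so it cannot force the displacement below $\delta/4$ for any positive time. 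These far-out particles are precisely the delicate ones, and they are why the paper's proof splits into cases. The repair stays inside your framework but reintroduces that dichotomy: the same estimates give the reverse inequality $\frac{d}{dt}\rho^2\ge -C\rho^2-C(1+e^{Ct})$, hence $\rho(t)^2\ge e^{-Ct}\rho(0)^2-C't\,e^{Ct}$; so if $\rho(0)\ge\delta/16$, say, then $\rho(t)\ge\delta/32$ for $t\lesssim\delta^2$, Lemma \ref{prop1} combined with Lemmas \ref{lem1} and \ref{lem2} yields $|u(t,y(t))|\lesssim 1$, and the naive estimate $|y(t)-y_0|\lesssim t$ applies; if instead $\rho(0)<\delta/16$, your triangle-inequality argument closes. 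Two smaller points: Lemma \ref{prop1} is stated only for $x\in\partial\Omega_i(t)$ while you invoke it along arbitrary trajectories in the support (harmless, as its proof never uses the boundary restriction), and the Lipschitz bounds of Lemmas \ref{lem1}--\ref{lem2} are stated on $\Omega_i(t)$ while you evaluate $F_i$ at $X_i(t)$, which need not belong to the support (also harmless, since by Lemma \ref{lem3} it lies within $O(\eps e^{Ct})$ of it).
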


The proof of Lemma \ref{lem6} is quite elementary.

\begin{proof} We may without loss of generality suppose that $T\lesssim 1$, because otherwise there is nothing to prove. Moreover, we may assume that $T$ is solely defined through \eqref{C5}, because any time implicitly determined by \eqref{C12} is independent of $\eps$ by definition.
In particular, one of the inequalities in \eqref{C5} has to be an equality, and thus, in view of the assumption on the initial data in \eqref{C9}, there are fluid particles carrying nonzero vorticity that are transported over a distance at least $\delta/4$ in the time interval $[0,T]$. We denote the minimal time in which a fluid particle moves over that distance by $T'$. More precisely, for every initial vortex patch $\bar{\Omega}_i$, we consider the $\delta/4$-neighborhood 
\begin{equation*}
\bar \Omega^{\delta}_i:=\left\{y\in \Omega:\: \dist(y,\bar{\Omega}_i)\le\delta/4\right\},
\end{equation*}
and define $T'=T'_\varepsilon$ as the first instant, when some patch $\Omega_i(t)$ touches $\partial \bar \Omega_i^{\delta}$. Hence, 
\begin{equation*}
T':=\min\left\{t\ge 0:\: {\phi_t(\bar \Omega_i )} \cap  \partial \bar \Omega_i^{\delta} \not=\emptyset \mbox{ for some } i\in\{1,\ldots,N\} \right\},
\end{equation*}
where $\phi$ is the flow associated with   $u$, cf.~\eqref{C13}.  Observe that $T'\le T$, and thus $T'\lesssim 1$ by assumption. We will  prove the lower bound $T'\gtrsim 1$, which is stronger than the statement of the lemma.

	Let $\varepsilon$ be arbitrarily fixed, and let $\Omega_i$ be the first patch that touches $\partial\bar\Omega_i^{\delta}$. Hence, since the flow is (Lipschitz) continuous in the time variable thanks to the boundedness of the velocity field, there exists an $x\in\partial\bar\Omega_i$ such that 
		\begin{equation}\label{eq11}
	|x-\phi_{T'}(x)|  \ge \frac{\delta}4.
		\end{equation}

	Let us divide the proof into two cases. 
	
	\emph{Case 1}. There exists $t\in[0,T']$ such that $|X_i(t)-\bar{X}_i|> \frac{\delta}{16}$. \\
	This means that $X_i(t)$ has covered a distance of at least $\delta/16$. In this case we use Lemma \ref{lem5}, obtaining
	\begin{equation*}
		\frac{\delta}{16}\lesssim t \le T'
	\end{equation*}
uniformly $\varepsilon$, which is what we have to prove.

	\emph{Case 2}. For every $t\in [0,T']$ it holds that $|X_i(t)-\bar{X}_i|\le\frac{\delta}{16}$. \\
	In this case, $X_i(t)$ has covered a smaller distance than  $\phi_{t}(x)$. Notice that because $\frac{\delta}4\le \dist(\bar \Omega_i,\partial\bar \Omega^{\delta}_i) \le |\phi_{T'}(x) - \bar X_i|$, we then have that
		\begin{equation}\label{eq15}
		|\phi_{T'}(x)-X_i(T')|\ge |\phi_{T'}(x)-\bar{X}_i|-|X_i(T')-\bar{X}_i|\ge \frac{\delta}{4}-\frac{\delta}{16}=\frac{3}{16}\,\delta.
	\end{equation}
We split our argument into two further subcases.
	
	\emph{Case 2.1}. For every $t\in [0,T']$ it holds that  $|\phi_t(x)-X_i(T')|>\frac{5 }{32}\delta$. \\
In this case we have
	\begin{equation}\label{eq12}
		|\phi_t(x)-X_i(t)|\ge |\phi_t(x)-X_i(T')|-|X_i(t)-X_i(T')|\ge \frac{5}{32}\,\delta-\frac{\delta}{8}=\frac{1}{32}\,\delta
	\end{equation}
	for every $t\in[0,T']$, because $|X_i(t)-X_i(T')|\le \delta/8$ from the hypothesis of case 2. Inequalities \eqref{eq11} and \eqref{eq12} yield the bound on $T'$. Indeed, thanks to Lemma \ref{prop1} and \eqref{eq12}, for every $t\in[0,T']$
	\begin{equation*}
		|u_i(t,\phi_t(x))|\lesssim 1+\frac{e^{C\,T'}}{|\phi_t(x)-X_i(t)|} \lesssim 1
	\end{equation*}
	(where we also used that $T'\lesssim 1$) and this, coupled with Lemmas \ref{lem1} and \ref{lem2}, gives
	\begin{equation*}
		|u(t,\phi_t(x))|\lesssim 1 \quad\mbox{for all } t\in[0,T'].
	\end{equation*}
	But then, using also \eqref{eq11} yields
	\begin{equation*}
		\frac{\delta}{4}\le|\phi_{T'}(x)-x|=\left|\int_{0}^{T'}u(t,\phi_t(x))\,dt\right|\lesssim T',
	\end{equation*}
	uniformly in  $\varepsilon$, as desired.
	
	\emph{Case 2.2}. There exists $t\in [0,T')$ such that		$|\phi_t(x)-X_i(T')|\le \frac{5}{32}\,\delta$.\\
	Consider the maximal time for which this happens, i.e.,
	\begin{equation*}
		T_1:=\max\left\{t\in[0,T'):\,|\phi_t(x)-X_i(T')|\le\frac{5}{32}\delta\right\}.
	\end{equation*}
From \eqref{eq15} it follows that $T_1<T'$ and 
	\begin{equation*}
		|\phi_{t}(x)-X_i(T')|\ge \frac{5}{32}\,\delta \quad\mbox{for all }  t\in[T_1,T'].
	\end{equation*}
	From this we infer that
	\begin{equation}\label{eq14}
		|\phi_t(x)-X_i(t)|\ge |\phi_t(x)-X_i(T')|-|X_i(t)-X_i(T')|\ge \frac{5}{32}\,\delta-\frac{\delta}{8}=\frac{\delta}{32} 
	\end{equation}
	for any $t\in[T_1,T']$, where we have used $|X_i(t)-X_i(T')|\le\delta/8$ by the general hypothesis of Case 2.
	Moreover,  using \eqref{eq15} again,
	\begin{equation}\label{eq16}
		|\phi_{T'}(x)-\phi_{T_1}(x)|\ge|\phi_{T'}(x)-X_i(T')|-|\phi_{T_1}(x)-X_i(T')|\ge \frac{3}{16}\,\delta-\frac{5}{32}\,\delta=\frac{\delta}{32}.
	\end{equation}
	From now on, we can conclude exactly like in Case 2.1 using inequalities \eqref{eq14} and \eqref{eq16} instead  of \eqref{eq12} and \eqref{eq11}, respectively, and working on the interval $[T_1,T']$ instead of $[0,T']$, to obtain
	\begin{equation*}
		\frac{\delta}{32}\lesssim(T'-T_1)\le T'
	\end{equation*}
	for this case.
	\end{proof}

Until now, we have established that the vorticity field remains concentrated around the center of vorticity during the evolution for  time intervals independent of $\eps$. In order to prove Theorems \ref{th2} and \ref{th3}, it remains to show that the centers of vorticity are $\eps$-close to the point vortices. 
For this observation, we have to establish a bound on the difference of the boundary contributions. 
In a first step, we turn to the velocities induced by the outer boundary.

\begin{lemma}\label{lem7}
	For any $t\in [0,T)$ and any $i\in\{1,\dots,N\}$
it holds that
	\begin{equation*}
		|\nabla\eta(t,Y_i(t))-\nabla\theta(t,Y_i(t))|\lesssim e^{C\,t}\,\varepsilon+\sum_{j}|X_j(t)-Y_j(t)| .
\end{equation*}
\end{lemma}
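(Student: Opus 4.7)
The plan relies on treating $\eta - \theta$ as a single harmonic function and propagating a boundary bound to the interior. Because $\eta(t,\cdot)$ and $\theta(t,\cdot)$ are both defined as harmonic extensions of explicit boundary data, $\eta - \theta$ is harmonic in $\Omega$. Since \eqref{C12} ensures $\dist(Y_i(t),\partial\Omega) \ge \delta/2$, the classical interior gradient estimate for harmonic functions combined with the maximum principle yields
\[
|\nabla(\eta-\theta)(t,Y_i(t))|
\;\lesssim\; \frac{1}{\delta}\,\|\eta-\theta\|_{L^\infty(\Omega)}
\;\lesssim\; \|\eta-\theta\|_{L^\infty(\partial\Omega)},
\]
so everything reduces to bounding the boundary trace.

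The boundary trace is explicit: since $a_j = \int \omega_j\,dy$,
\[
(\eta-\theta)(t,x) \;=\; \sum_{j=1}^{N}\int_\Omega \bigl[G(x-y) - G(x-Y_j(t))\bigr]\,\omega_j(t,y)\,dy, \qquad x\in\partial\Omega.
\]
For such $x$, both $y \in \spt\omega_j(t)$ and $Y_j(t)$ lie at distance at least $\delta/2$ from $x$, by \eqref{C5} and \eqref{C12}. Using $G(u) = -\frac{1}{2\pi}\log|u|$, the elementary inequality $|\log a - \log b| \le |a-b|/\min(a,b)$ together with the reverse triangle inequality $||x-y|-|x-Y_j|| \le |y-Y_j|$ then gives the pointwise bound
\[
|G(x-y) - G(x-Y_j(t))| \;\lesssim\; \frac{1}{\delta}\,|y - Y_j(t)|.
\]
The triangle inequality $|y - Y_j| \le |y - X_j| + |X_j - Y_j|$, Jensen's inequality applied to the probability measure $\omega_j/a_j$, and Lemma \ref{lem3} control the first term as
\[
\int_\Omega |y-X_j|\,|\omega_j(t,y)|\,dy
\;\le\; |a_j|\,W_2\!\left(\frac{\omega_j(t)}{a_j},\delta_{X_j(t)}\right)
\;\lesssim\; e^{Ct}\varepsilon.
\]
Combining these pieces with $|a_j|\sim 1$ then delivers $\|\eta(t) - \theta(t)\|_{L^\infty(\partial\Omega)} \lesssim e^{Ct}\varepsilon + \sum_j |X_j(t) - Y_j(t)|$, and the proof closes.

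The one step I view as requiring care is deriving the pointwise Lipschitz-type estimate for $G(x-y) - G(x-Y_j)$. A naive approach of integrating $\nabla G$ along the segment from $y$ to $Y_j$ fails, because this segment can pass arbitrarily close to $x$ (for instance, when $\Omega$ has narrow necks, or when $y$ and $Y_j$ sit on opposite sides of a thin portion of $\Omega$), and $\nabla G$ blows up there. The logarithmic comparison $|\log a - \log b| \le |a-b|/\min(a,b)$ bypasses this difficulty entirely: it only requires the \emph{endpoints} $|x-y|$ and $|x-Y_j|$ to be bounded below by $\delta/2$, which is precisely what \eqref{C5} and \eqref{C12} provide.
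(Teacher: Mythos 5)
Your proof is correct and follows essentially the same route as the paper: bound $\eta-\theta$ on $\partial\Omega$ via a pointwise Lipschitz-type estimate on $G$, control the resulting integral by the triangle inequality, Jensen, and Lemma \ref{lem3}, then transfer to the interior gradient by the maximum principle and standard harmonic estimates. The one place you deviate is worth noting: the paper simply writes $|G(x-y)-G(x-Y_j)|\le\|\nabla G\|_{L^\infty(B_{\delta/2}(0)^c)}\,|y-Y_j|$, which tacitly integrates $\nabla G$ along the segment from $y$ to $Y_j$; as you observe, only the endpoints are guaranteed to be at distance $\ge\delta/2$ from $x$, and in a non-convex or multiply connected $\Omega$ the segment can approach $x$. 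Your substitute, $|\log a-\log b|\le|a-b|/\min(a,b)$ combined with the reverse triangle inequality, needs only the endpoint lower bounds and thus closes this small implicit gap while yielding the same constant of order $1/\delta$. Everything else matches the paper's argument.
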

\begin{proof} 
	Let us drop the $t$'s in this proof. Consider $x\in\partial\Omega$, then
	\begin{equation*}
		\begin{split}
		|\eta(x)-\theta(x)|&=\left|\int_{\Omega} G(x-y)\,\omega(y)\,dy-\sum_{j}a_j G(x-Y_j)\right| \\
		&\le\sum_{j}\int_{\Omega}|G(x-y)-G(x-Y_j)|\,|\omega_j(y)|\,dy.
		\end{split}
	\end{equation*}
	Since we are considering times smaller than $T$, we know that  $|x-Y_i|\ge\delta/2$  and $|x-y|\ge \delta/2$ for any $y\in \spt\omega$. In particular, it holds that
	\begin{equation*}
		|G(x-y)-G(x-Y_j)|\le\|\nabla G\|_{L^\infty(B_{\delta/2}(0)^c)}\,|y-Y_j|\lesssim |y-Y_j|.
	\end{equation*}
	Using the triangular inequality we then have 
	\begin{equation*}
		|\eta(x)-\theta(x)|\lesssim \sum_{j}\int_{\Omega}|y-X_j|\,|\omega_j(y)|\,dy+ \sum_{j}|a_j|\,|X_j-Y_j|.
	\end{equation*}
	Bounding the integral in the first term of the right-hand side by the Jensen inequality and Lemma \ref{lem3}, we see that
	\begin{equation}\label{eq20}
		\int_{\Omega}|y-X_j|\,|\omega_j(y)|\,dy\le|a_j|\,W_2\left(\frac{\omega_j}{a_j},\,\delta_{X_j}\right)\le|a_j|\,e^{C\,t}\,\varepsilon.
	\end{equation}
	Hence
	\begin{equation*}
		\|\eta(t)-\theta(t)\|_{L^\infty(\partial\Omega)}\lesssim e^{C\,t}\,\varepsilon+\sum_{j}|X_j(t)-Y_j(t)| 
			\end{equation*}
	for any $t\in[0,T)$. On the other hand, because $\eta-\theta$ is harmonic, this bound carries over to all of $\Omega$ by the maximum principle. Standard gradient estimates for harmonic functions in the interior of $\Omega$ then yield the desired estimate.
	\end{proof}

It remains to treat the velocity contributions that are due to the inner boundary components.

\begin{lemma}\label{lem8}
	For any $t\in[0,T)$ and any $i\in\{1,\ldots N\}$ it holds that
	\begin{equation*}
	    \begin{split}
	    H&:=\left|\frac{1}{a_i}\int_{\Omega} u^h(x)\,\omega_i(x)\,dx-\sum_{m=1}^{M}\left(\sum_{j=1}^{N}a_j w_m(Y_j)+\gamma_m\right)\xi_m(Y_i) \right| \\
	    &\qquad\qquad\lesssim e^{Ct}\varepsilon+\sum_{j}|X_j(t)-Y_j(t)|.
	    \end{split}
	\end{equation*}
\end{lemma}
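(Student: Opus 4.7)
The plan is to expand $u^h$ according to its definition and decompose the difference $H$ into contributions coming (a) from the harmonic-measure coefficients and (b) from evaluating $\xi_m$ against the vortex patch versus at the point vortex. Writing $c_m := \int_{\Omega} w_m(z)\omega(z)\,dz + \gamma_m$ and $C_m := \sum_j a_j w_m(Y_j) + \gamma_m$, as well as $I_m := \frac{1}{a_i}\int_{\Omega} \xi_m(x)\omega_i(x)\,dx$, we have $\frac{1}{a_i}\int_{\Omega} u^h\omega_i\,dx = \sum_m c_m I_m$ and the target quantity is $\sum_m C_m \xi_m(Y_i)$. The standard identity $c_m I_m - C_m \xi_m(Y_i) = (c_m - C_m)I_m + C_m(I_m - \xi_m(Y_i))$ reduces the problem to estimating the two factors $|c_m - C_m|$ and $|I_m - \xi_m(Y_i)|$, since $|I_m|$ and $|C_m|$ are both bounded uniformly in $\varepsilon$ (because $\xi_m$ is smooth on the compact subset of $\Omega$ containing the patches, and $w_m$ is bounded by the maximum principle, as noted in the proof of Lemma \ref{lem2}).

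For the first factor, writing $a_j w_m(Y_j) = \int_{\Omega} w_m(Y_j)\omega_j(z)\,dz$ and summing over $j$, we get
\begin{equation*}
c_m - C_m = \sum_j \int_{\Omega} \bigl(w_m(z) - w_m(Y_j)\bigr)\omega_j(z)\,dz.
\end{equation*}
Since $\spt \omega_j(t)$ stays at distance at least $\delta/2$ from $\partial\Omega$ by \eqref{C5}, and $Y_j$ at the same distance by \eqref{C12}, the harmonic function $w_m$ is Lipschitz on this region with a constant depending only on $\delta$. Hence, inserting $\pm X_j$ and applying the triangle inequality,
\begin{equation*}
|c_m - C_m| \lesssim \sum_j \int_{\Omega} |z - X_j||\omega_j(z)|\,dz + \sum_j |a_j||X_j - Y_j|,
\end{equation*}
and Jensen's inequality together with Lemma \ref{lem3} (exactly as in \eqref{eq20}) bounds the first sum by $e^{Ct}\varepsilon$.

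For the second factor, the same ingredients applied to $\xi_m$, which is smooth on the compact region containing $\spt\omega_i(t)$ and $Y_i$, yield
\begin{equation*}
|I_m - \xi_m(Y_i)| \le \frac{1}{|a_i|}\int_{\Omega} |\xi_m(x) - \xi_m(Y_i)||\omega_i(x)|\,dx \lesssim \frac{1}{|a_i|}\int_{\Omega} |x - X_i||\omega_i(x)|\,dx + |X_i - Y_i|,
\end{equation*}
and the integral is again controlled by $e^{Ct}\varepsilon$ via Lemma \ref{lem3}. Summing over the finitely many $m$ and combining these two estimates through the decomposition above gives the desired bound. I do not expect any real obstacle here: the structure mirrors Lemma \ref{lem7} precisely, with $w_m$ and $\xi_m$ playing the roles of $G$ in that argument, and the smoothness of $w_m,\xi_m$ on the relevant compact sets is immediate from harmonicity and the separation conditions \eqref{C5}, \eqref{C12}.
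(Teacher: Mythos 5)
Your argument is correct and follows essentially the same route as the paper's proof: both rest on an add-and-subtract splitting of the product, the Lipschitz bounds for $w_m$ and $\xi_m$ on compact subsets away from $\partial\Omega$ guaranteed by \eqref{C5} and \eqref{C12}, the uniform bounds $|\int_\Omega w_m\omega_j\,dz|\lesssim 1$ and $|\xi_m|\lesssim 1$, and the combination of Jensen's inequality with Lemma \ref{lem3}. The only (harmless) difference is organizational: you factor the integration against $\omega_i/a_i$ first and split the product of the integrated quantities $c_m I_m - C_m\xi_m(Y_i)$, whereas the paper performs the analogous splitting pointwise in $x$ and integrates at the end.
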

\begin{proof}
	Again, in this proof we forget about the dependence on time. For notational convenience, we write
	\begin{equation*}
		u^h=\sum_{m=1}^{M} \left(\int_{\Omega} w_m\,\omega\,dz+\gamma_m\right) \xi_m = \sum_{m=1}^{M} u_m^h.
	\end{equation*}	
	
	Let us first consider the difference
	\begin{align*}
		H_m&:=\left|\int_{\Omega}w_m\omega\,dz\,\xi_m(x)-\sum_{j=1}^{N}a_j w_m(Y_j)\,\xi_m(Y_i)\right| \\
		&\le\sum_{j=1}^{N}\left|\int_{\Omega}w_m\omega_j\,dz\,\xi_m(x)-a_j w_m(Y_j)\,\xi_m(Y_i) \right| \\
		&\le\sum_{j=1}^{N} \left|\int_{\Omega}w_m\omega_j\,dz\right| |\xi_m(x)-\xi_m(Y_i)|  \\
		&\qquad +\sum_{j=1}^{N} \left| \int_{\Omega}w_m\omega_j\,dz-a_j w_m(Y_j)\right||\xi_m(Y_i)|.
	\end{align*}
	We recall from the proof of Lemma \ref{lem2} that $|\int_{\Omega} w_m \omega_j\,dz|\lesssim 1$. In addition, we observe  that $\nabla w_m$ is bounded on $\Omega_j$ because it is far from the boundary of $\Omega$, and therefore $w_m$ is Lipschitz here. Hence
	\begin{equation*}
		\begin{split}
		\left|\int_{\Omega}w_m\omega_j\,dz-a_j w_m(Y_j)\right|&\le\int_{\Omega_j} |w_m(z)-w_m(Y_j)|\,|\omega_j(z)|\,dz \\
		&\lesssim \int_{\Omega} |z-Y_j|\,|\omega_j(z)|\,dz \\
		&\lesssim W_2\left(\frac{\omega_j}{a_j},\delta_{X_j}\right)+|X_j-Y_j| \\
		&\lesssim e^{Ct}\eps+|X_j-Y_j|,
		\end{split}
	\end{equation*}
	where we also used Jensen's inequality and the control on the Wasserstein distance in Lemma \ref{lem3}. Therefore, since $|\xi_m(Y_i)|\lesssim 1$, we have that
	\begin{equation*}
	    H_m\lesssim |\xi_m(x)-\xi_m(Y_i)|+e^{Ct}\varepsilon+\sum_{j=1}^{N}|X_j-Y_j|.
	\end{equation*}

We use this estimate in order to bound the velocity induced by the $m$th hole,
	\begin{align*}
	    \left| u_m^h(x)-\left(\sum_{j=1}^N a_j w_m(Y_j)+\gamma_m\right)\xi_m(Y_i) \right|&\le H_m+|\gamma_m|\,|\xi_m(x)-\xi_m(Y_i)| \\
	    & \lesssim|\xi_m(x)-\xi_m(Y_i)|+e^{Ct}\varepsilon+\sum_{j=1}^{N}|X_j-Y_j|.
	\end{align*}
    Finally, observing that $|\xi_m(x)-\xi_m(Y_i)|\lesssim |x-Y_i|$ for $x\in\Omega_i$, because $\nabla\xi_m$ is bounded   away from the boundaries, we have
    \begin{equation*}
    	\begin{split}
    	H&\lesssim \sum_{m=1}^{M}\int_{\Omega}\left| u_m^h(x)-\left(\sum_{j=1}^N a_j w_m(Y_j)+\gamma_m\right)\xi_m(Y_i) \right| |\omega_i(x)|\,dx \\
    	&\lesssim \sum_{m=1}^{M} \int_{\Omega}|x-Y_i|\,|\omega_i(x)|\,dx+e^{Ct}\varepsilon+\sum_{j=1}^{N}|X_j-Y_j| \\
    	&\lesssim e^{Ct}\varepsilon+\sum_{j=1}^{N}|X_j-Y_j|,
    	\end{split}
    \end{equation*}
  where we have used,  as before, Jensen's inequality and Lemma \ref{lem3}.
\end{proof}

We are now in the position to prove Theorems \ref{th2} and  \ref{th3}.

\begin{proof}[Proof of Theorems \ref{th2} and  \ref{th3}]

We start with an estimate on the rate of change of the distance of $X_i(t)$ and $Y_i(t)$. Using the velocity formula of the vorticity centers \eqref{eq19} and the definition of the point vortex system \eqref{eq18}, we find that
\begin{align*}
\MoveEqLeft  \left|\frac{d}{dt}X_i(t) - \frac{d}{dt}Y_i(t)\right|\\
& \le \sum_{j\not=i}\left| \frac1{a_i} \iint_{\Omega\times\Omega} K(x-y)\omega_i(x)\omega_j(y)\, dxdy - a_j K(Y_i(t) - Y_j(t))\right|\\
&\qquad + \left|\frac1{a_i} \int_{\Omega} \grad\eta(x)\omega_i(x)\, dx - \grad\theta(Y_i(t))\right| + H \\
& \le \frac1{|a_i|} \sum_{j\not=i} \iint_{\Omega\times\Omega} |K(x-y) - K(Y_i(t) - Y_j(t))| |\omega_i(x)||\omega_j(y)|\, dxdy\\
&\qquad + \frac1{|a_i|} \int_{\Omega} |\grad\eta(x)  - \grad\eta(Y_i(t))||\omega_i(x)|\, dx+ |\grad\eta(Y_i(t)) - \grad\theta(Y_i(t))|  + H ,
\end{align*}
where we have dropped most of the $t$'s for notational convenience.  Using the Lip\-schitz property of the Biot--Savart kernel away from the origin, the Lipschitz estimate of Lemma \ref{lem2} on $\eta$, the requirements \eqref{C5} and \eqref{C12} on $T$ and Lemmata \ref{lem7} and \ref{lem8}, we find that
\begin{align*}
\left|\frac{d}{dt}X_i(t) - \frac{d}{dt}Y_i(t)\right| 
& \lesssim \sum_{j=1}^N\int_{\Omega} |x - Y_j(t)| |\omega_j(x)|\, dx + e^{Ct}\eps + \sum_{j} |X_j(t) -Y_j(t)|.
\end{align*}
Using  the triangle and Jensen's inequalities and the concentration estimate from Lemma \ref{lem3}, we finally deduce
\[
\left|\frac{d}{dt}X_i(t) - \frac{d}{dt}Y_i(t)\right| \lesssim   e^{Ct}\eps + \sum_{j} |X_j(t) -Y_j(t)|.
\]
Since
\[
\frac{d}{dt}|X_i(t) - Y_i(t)|\le \left|\frac{d}{dt}X_i(t) - \frac{d}{dt}Y_i(t)\right| ,
\]
summing over $i$ and using a Gronwall argument yields 
\[
\sum_i |X_i(t) - Y_i(t)| \lesssim   e^{Ct}\eps + e^{Ct} \sum_i |\bar{X}_i - \bar{Y}_i| ,
\]
where the value of $C$ might have changed.
Notice that
\[
|\bar{X}_i - \bar{Y}_i| = \frac{1}{a_i}\int |\bar{X}_i - \bar{Y}_i|\,\bar{\omega}_i(x)\,dx,
\]
then by the triangle and Jensen's inequalities, \eqref{eq23} and \eqref{C6} it follows that $|\bar{X}_i - \bar{Y}_i|\lesssim  \eps$.
A combination of the previous bounds yields the full statement of Theorem~\ref{th3}. 

To derive Theorem \ref{th2}, we have to combine Theorem \ref{th3} and Lemma \ref{lem3}.
\end{proof}

\section*{Acknowledgement} 
The authors thank the anonymous referee for several suggestions that helped to  improve  the manuscript substantially. They are grateful to Helena Nussenzveig Lopes for pointing out references. The second author, moreover, acknowledges inspiring discussions with Bob Jerrard on the topic of this paper.  This work is funded by the Deutsche Forschungsgemeinschaft (DFG, German Research Foundation) under Germany's Excellence Strategy EXC 2044 --390685587, Mathematics M\"unster: Dynamics--Geometry--Structure.

\bibliography{euler}
\bibliographystyle{acm}

\end{document}